\newcommand{\required}[1]{\section*{\hfil \sharp1\hfil}}
\newcommand{\Beq}{\begin{equation}}
\newcommand{\Eeq}{\end{equation}}
\newcommand{\beq}{\begin{equation*}}
\newcommand{\eeq}{\end{equation*}}
\newcommand{\bal}{\begin{align}}
\newcommand{\eal}{\end{align}}
\newcommand{\G}{\Gamma}
\newcommand{\D}{\mathrm{d}}
\renewcommand{\O}{\Omega}
\newcommand{\lb}{\left(}
\newcommand{\rb}{\right)}
\renewcommand{\O}{\Omega}
\newcommand{\F}{f_{i_1\dots i_m}}
\newtheorem{theorem}{Theorem}
\newtheorem{Lemma}{Lemma}
\newtheorem{prop}{Proposition}
\newtheorem{definition}{Definition}
\theoremstyle{definition}
\newtheorem{claim}{Claim}
\newtheorem{remark}{Remark}
\newcommand{\ac}[1]{\begin{quotation}\textbf{Anuj's comment:\
		}{\textit{\sharp1}}\end{quotation}}
\newcommand{\rc}[1]{\begin{quotation}\textbf{Rohit's comment:\
		}{\textit{\sharp1}}\end{quotation}}
\title{ Support Theorems and an Injectivity Result for Integral Moments of a Symmetric $m$-Tensor Field}
\date{}
\author{Anuj Abhishek and Rohit Kumar Mishra}
\begin{document}
	\maketitle
	\begin{abstract}
	In this work, we show an injectivity result and support theorems for integral moments of an $m$-tensor field on a simple, real analytic, Riemannian manifold. Integral moments of $m$-tensor fields were first introduced by Sharafutdinov. First we generalize a Helgason type support theorem proven by Krishnan and Stefanov in ``A support theorem for the geodesic ray transform of symmetric tensor fields", Inverse Problems and Imaging, 3(3):453-464,2009. We use this extended result along with the first $(m+1)$-integral moments of an $m$-tensor field to prove the aforementioned results.
	\end{abstract}
\textbf{Keywords:} Integral moments, Analytic microlocal analysis, Support theorems.\\\\
\noindent\textbf{Mathematics Subject Classification (2010)}: 47G10, 47G30, 53B21
	\section{Introduction}
	 Let $(\O,g)$ be a compact, simple, real-analytic Riemannian manifold of dimension $n$ with smooth boundary. We will parametrize the maximal geodesics in $\O$ with endpoints on $\partial \O$ by their starting points and directions.
	 \par  Set 
	 $$\G_{-}:=\big{\{}(x,\xi)\in T\O|\ x\in \partial \O, |\xi|=1,\langle\xi,\nu(x)\rangle < 0\big{\}},$$
	 where $\nu(x)$ is the outer unit normal to $\partial \O$ at $x$.  Then we will define the $q$-th integral moment of a symmetric $m$-tensor field $f$, $I^qf$ as a function on $\G_{-}$ by
	  \begin{align*}
	 I^qf(x, \xi) = \int_{0}^{l(\gamma_{x,\xi})} t^q\langle f(\gamma_{x,\xi}(t)), \dot{\gamma}_{x,\xi}^m(t) \rangle dt = \int_{0}^{l(\gamma_{x,\xi})}t^q f_{i_1\dots i_m}(\gamma_{x,\xi}(t))\dot{\gamma}_{x,\xi}^{i_1}(t)\cdots \dot{\gamma}_{x,\xi}^{i_m}(t) dt.
	 \end{align*}
	 where $\gamma_{x,\xi}(t)$ is the geodesic starting from $x$ in the direction $\xi$ and $l(\gamma_{x,\xi})$ is the value of  the parameter $t$ at which this geodesic intersects the boundary again. The above definition of integral moments for a symmetric $m$-tensor fields was first introduced by Sharafutdinov in the context of $\mathbb{R}^n$, see \cite{Sharafutdinov_Generalized_Tensor_Fields}. In the same paper he proved that if the first $(m+1)$ integral moments $I^qf$ for $q=0,1, \dots, m$ of a compactly supported symmetric $m$-tensor field $f$ are known along all straight lines, then $f$ can be uniquely recovered.
	 \par \noindent Zeroth integral moment coincides with the usual geodesic ray transform of a symmetric $m$-tensor field. In this work, we are interested in  injectivity results and  support theorem for integral moments defined above. Microlocal techniques play a very crucial role in proving such results. Guillemin  first introduced the microlocal approach in the Radon transform setting, see \cite{Guillemin-Sternberg_Book}. Analytic microlocal techniques were used by Boman and Quinto in \cite{Boman-Quinto-Duke} to prove support theorems for Radon transforms with positive real-analytic weights. For more literature on such support theorems, we refer to the reader \cite{Q2006:supp,Boman1991,Boman1993,Boman1992,Eskin1998,Gonzalez1994,Sato1973,Zhou2000} and references therein. For the analytic microlocal techniques used in this paper, we will mainly refer to \cite{SU2,SU1,SU3,Sjostrand,KS}.
	 \par \noindent The geodesic ray transform of any symmetric tensor field of order 2, which in our notation will be denoted by $I^{0}(f)$ arises naturally in the context of lens and boundary rigidity problems and has been studied in e.g. \cite{Sharafutdinov_Book},\cite{Sharafutdinov},\cite{SU1},\cite{SU2}. Support theorems for such transforms have been of independent interest among mathematicians. In  \cite{SU2}, the authors prove a s-injectivity result for symmetric $2$ tensors fields. The same proof works for a symmetric tensor field of any order. That is if  $I^{0}(f) =0$ for  all the geodesics of $\O$ then its solenoidal part vanishes. A question arises as to what data is sufficient for us to conclude such an injectivity result for the tensor field $f$ itself. Using the result stated above, we show that if  $I^q f =0$ for $q = 0, 1, \dots , m$ for all the geodesics of $\O$, then $f=0$. Injectivity result for the local geodesic ray transform of a function has been proved in \cite{Uhlmann2016} using new techniques. We also treat the case in which the integral moments are known for the open set of geodesics that do not intersect a given geodesically convex set. We do so using the techniques laid out in \cite{KS}, where the authors prove a Helgason type support theorem for symmetric tensor fields of order $2$ over simple, real analytic Riemannian manifolds. We first extend the result in \cite{KS} for symmetric $m$ tensor fields. Using this new result, we prove a stronger version of such support type theorems, i.e. if we know $I^q f =0$ for $q = 0, 1, \dots , m$ over the open set of geodesics not intersecting a convex set, then it implies that the support of $f$ lies in the convex set. We would also like to mention that Krishnan already proved such a support theorem for the case of functions in \cite{K1}.
	 \par\noindent The paper is organized as follows. In Section $2$ we give the definitions and our main theorems. Section $3$ has some preliminary propositions and lemmas that are needed for the proof of the main theorems. In Section $4$ we will prove a Helgason type support theorem which we state in Section $2$ and prove the support theorem. In Section $5$ we prove the s-injectivity result mentioned above and use it to prove the injectivity of integral moments. We will provide proof of some lemmas and inequalities in the Appendix.
	\par \noindent \textbf{Acknowledgements :} We would like to thank Vladimir Sharafutdinov for suggesting this problem. Besides, we would also like to express our sincere gratitude to Eric Todd Quinto and Venky Krishnan for several hours of fruitful discussions. The authors benefited from the support of the Airbus Group Corporate Foundation
	Chair ``Mathematics of Complex Systems" established at TIFR Centre for Applicable Mathematics
	and TIFR International Centre for Theoretical Sciences, Bangalore, India. 
	\section{Definitions and Statements of the Theorems}
	\begin{definition}[Simple Manifold]
		A compact Riemannian manifold $(\O,g)$ with boundary is said to be simple if 
		\begin{enumerate}
			\item[(i)] The boundary $\partial \O$ is strictly convex: $\langle \nabla_\xi \nu(x) , \xi \rangle > 0 $ for each $\xi \in T_x(\partial \O)$ where $\nu(x)$ is the unit outward normal to the boundary. 
			\item[(ii)] The map $\exp_x:\exp_x^{-1}( \O) \rightarrow \O$ is a diffeomorphism for each $x \in \O$.
		\end{enumerate}
	\end{definition}
	\noindent The second condition ensures that any two points $x, y$ in $\O$ are connected by a unique geodesic in $\O$ that depends smoothly on $x, y$. Any simple manifold $M$ is necessarily diffeomorphic to a ball in $\mathbb{R}^n$, see \cite{Sharafutdinov_Book}.  Therefore, in the analysis of simple manifolds, we can assume that $\O$ is a domain $\Omega \subset \mathbb{R}^n$.
	We are going to work on a fixed simple Riemannian manifold $(\O,g)$ with a fixed real analytic atlas. A  tensor field is said to be analytic on a set $U$ if it is real analytic in some neighborhood of $U$. Let $S^m(\O)$ be the collection of symmetric $m$-tensor fields defied on $\O$. We will work with symmetric $m$-tensor field $f = \{f_{i_1\dots i_m}\}$. We will assume the  Einstein summation convention and raise and lower indexes using the metric tensor. The tensors $\F$ and $f^{i_1\dots i_m} =  f_{j_1\dots j_m}g^{i_1j_1}\cdots g^{i_mj_m}$ will be thought of as same tensors with different representations. 
	\par \noindent It is well known from \cite{Sharafutdinov_Book} that any symmetric $m$-tensor field can be decomposed uniquely in the following way:
	\begin{theorem}\label{Th:Decomposition}
		\cite[Theorem 3.3.2]{Sharafutdinov_Book} \ Let $\O$ be a compact Riemannian manifold with boundary; let $k\geq  1$
		and $m\geq 0$ be integers. For every field $f \in H^k(S^m(\O))$, there exist uniquely determined $f^s \in H^k(S^m(\O))$ and $v\in H^{k+1}(S^{m-1}(\O))$ such that
		$$ f =f^s + \D v,\quad \quad  \delta f^s = 0,\quad \quad  v|_{\partial \O} = 0.$$
		We call the fields $f^s$ and $\D v$ the solenoidal and potential parts respectively of the field $f$.
	\end{theorem}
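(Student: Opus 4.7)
The plan is to recast the claimed decomposition as a uniquely solvable elliptic boundary value problem for the potential $v$, and then to extract uniqueness from the energy identity. Requiring $\delta(f - \D v)=0$ together with $v|_{\partial \O}=0$ is equivalent to asking $v$ to solve the Dirichlet problem
\[
\delta \D v \;=\; \delta f \ \text{ in }\O, \qquad v|_{\partial \O}=0.
\]
So the whole theorem is reduced to showing that this BVP is uniquely solvable in $H^{k+1}(S^{m-1}(\O))$ whenever $\delta f \in H^{k-1}(S^{m-1}(\O))$, after which $f^s := f - \D v$ is forced to be the solenoidal component.

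Next, I would verify that $\delta \D$ is elliptic on $S^{m-1}(\O)$ by computing its principal symbol: the symbol of $\D$ at a covector $\xi$ is (up to a constant) symmetrized tensor product with $\xi$, and the symbol of $\delta$ is contraction with $\xi^\sharp$; their composite is a positive-definite self-map of $S^{m-1}(T^*_x \O)$ for every $\xi\neq 0$. Moreover $\delta \D$ is formally self-adjoint and non-negative: integration by parts against $v$ with $v|_{\partial \O}=0$ gives the energy identity $\L \delta \D v,v\R_{L^2} = \|\D v\|_{L^2}^2$. Combined with the Dirichlet boundary condition this is a coercive, self-adjoint elliptic BVP, so the Lopatinskii--Shapiro condition is satisfied and standard elliptic regularity theory provides the required Sobolev mapping properties and gains two derivatives from $\delta f \in H^{k-1}$ to $v \in H^{k+1}$.

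Injectivity uses the same energy identity: if $\delta \D v = 0$ with $v|_{\partial \O}=0$, then $\|\D v\|_{L^2}^2 = 0$, hence $\D v = 0$, and the standard fact that the only symmetric tensor field with vanishing symmetrized covariant derivative and vanishing boundary trace is zero forces $v \equiv 0$. Existence and regularity then deliver a unique $v \in H^{k+1}(S^{m-1}(\O))$, and uniqueness of the decomposition follows immediately: if $\D v_1 + f_1^s = \D v_2 + f_2^s$ with both $v_i$ vanishing on $\partial \O$ and both $f_i^s$ solenoidal, applying $\delta$ reduces to the Dirichlet problem $\delta \D (v_1-v_2) = 0$, whose only solution is zero.

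The main obstacle is justifying the Lopatinskii--Shapiro condition carefully for $\delta \D$ acting on higher-rank symmetric tensors; while the scalar Laplacian analogue is transparent, for symmetric $(m-1)$-tensor fields one must check, in the sense of Agmon--Douglis--Nirenberg, that the Dirichlet boundary condition together with the tensorial principal symbol forms an elliptic boundary system. Once this is in place, everything else reduces to the energy identity and the standard theory of elliptic BVPs, which is how this decomposition is established in \cite{Sharafutdinov_Book}.
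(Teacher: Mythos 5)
The paper does not prove this statement at all --- it is quoted verbatim as \cite[Theorem 3.3.2]{Sharafutdinov_Book} --- and your proposal is essentially a faithful reconstruction of the proof given there: reduce to the Dirichlet problem $\delta \D v=\delta f$, $v|_{\partial\O}=0$, verify that $\delta\D$ with Dirichlet conditions is a coercive elliptic BVP, and get uniqueness from the energy identity together with the fact that a Killing-type field ($\D v=0$) vanishing on $\partial\O$ is identically zero. The only nit is the sign in your energy identity (in Sharafutdinov's convention $\D$ and $-\delta$ are formally adjoint, so $\L-\delta\D v,v\R=\|\D v\|^2$), which does not affect the argument.
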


	\par \noindent Let $\widetilde{\O}$ be an open, real analytic extension of $\O$ such that  $g$ can also be extended to a real analytic metric in  $\widetilde{\O}$. We will also extend all symmetric tensor fields $f$ defined on $\O$ by $0$ in $\widetilde{\O}\setminus  \O$.
	We will think of  each maximal geodesic in $\O$ as a restriction of a geodesic with distinct endpoints in $\widetilde{\O}\setminus \O$ to $\O$. Let $\gamma_{[x,y]}$ be the geodesic connecting   $x$ and $y$.
	\par \noindent Let $\mathcal{A}$ be an open set of geodesics with endpoints in $\widetilde{\O} \setminus \O$ such that any geodesic in $\mathcal{A}$ is homotopic, within the set  $\mathcal{A}$, to a geodesic lying outside $\O$. Set of points lying on the geodesics in $\mathcal{A}$ is denoted by $\O_{\mathcal{A}}$ i.e. 	$\O_{\mathcal{A}} = \underset{\gamma \in \mathcal{A}}{\bigcup} \gamma$ and $\partial_{\mathcal{A}} \O =\O_{\mathcal{A}} \cap \partial \O $. Now we will define what we mean by a geodesically convex subset.
	\begin{definition}
		A subset $K$ of the Riemannian manifold $(\O,g)$ is said to be geodesically convex if for any two points $x\in K$ and $y\in K$, the geodesic connecting them lies entirely in the set $K$.
	\end{definition}
	\par \noindent Finally, let  $\mathcal{E}^\prime(\widetilde{\O})$ be the space of compactly supported tensor fields. We can then extend the definition of $I$ by duality on tensor fields which are distributions in $\widetilde{\O}$ supported in $\O$, see \cite{KS}. Now we are ready to state the main theorems that we will prove in this article.
	
	\begin{theorem}\label{Th:Extension theorem}
		Let $f$ be a symmetric $m$-tensor field on a simple real analytic manifold $(\O,g)$ with components in $\mathcal{E}^\prime(\widetilde{\O})$ and supported in $\Omega$ and $K$ be a closed geodesically convex subset of $\O$. If for each geodesic $\gamma$ not intersecting $K$, we have that $I^{0}f(\gamma) = 0$ then we can find a $(m-1)$-tensor field $v$ with components in $ \mathcal{D}^\prime (\text{int}(\widetilde{\O})\setminus K)$  such that $f = \D v$ in $\text{int}(\widetilde{\O})\setminus K$ and $v = 0$ in $\text{int}(\widetilde{\O})\setminus \O$.	
	\end{theorem}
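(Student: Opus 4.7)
The plan is to follow the strategy of Krishnan--Stefanov \cite{KS}, adapting each step from symmetric $2$-tensors to symmetric $m$-tensors: reduce the theorem to an analytic microlocal regularity statement for the solenoidal part of $f$, and then produce the potential $v$ by analytic continuation.

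First I would extend $f$ by zero to the real-analytic extension $\widetilde{\Omega}$ and apply Theorem \ref{Th:Decomposition} there to write $f = h + \D v_0$ with $\delta h = 0$ and $v_0|_{\partial \widetilde{\Omega}} = 0$. Along any maximal geodesic $\gamma$ of $\widetilde{\Omega}$, the identity $\langle \D v_0, \dot\gamma^m\rangle = \frac{d}{dt}\langle v_0, \dot\gamma^{m-1}\rangle$ (since $\nabla_{\dot\gamma}\dot\gamma = 0$) together with $v_0|_{\partial\widetilde{\Omega}} = 0$ gives $I^{0}(\D v_0)(\gamma) = 0$. Hence the hypothesis becomes $I^{0} h(\gamma) = 0$ on every geodesic $\gamma$ avoiding $K$.

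The core step is to show that $h$ is real-analytic on $\mathrm{int}(\widetilde{\Omega}) \setminus K$. Fix $x_0 \notin K$ and a nonzero covector $\xi_0$ at $x_0$. Geodesic convexity of $K$ together with simplicity of $(\Omega,g)$ produces a geodesic $\gamma_0$ through $x_0$ conormal to $\xi_0$ that avoids $K$. I would then construct an analytic parametrix for the localised normal operator $N_\chi := I^{0\,*}\chi\, I^{0}$, where $\chi$ is an analytic cut-off supported in a conic neighbourhood of $\gamma_0$ inside the open set of $K$-avoiding geodesics. Following the analytic stationary phase method of Sj\"ostrand \cite{Sjostrand} used in \cite{SU1,SU2,KS}, $N_\chi$ is an analytic pseudodifferential operator of order $-1$ whose principal symbol, restricted to the solenoidal subspace of symmetric $m$-tensors at $(x_0,\xi_0)$, is analytically elliptic. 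Consequently $N_\chi h = 0$ microlocally at $(x_0,\xi_0)$ forces $(x_0,\xi_0) \notin \mathrm{WF}_A(h)$, and varying the base point and covector gives analyticity of $h$ throughout $\mathrm{int}(\widetilde{\Omega}) \setminus K$. To finish, note that on $\mathrm{int}(\widetilde{\Omega}) \setminus \overline{\Omega}$ we have $f = 0$, so $h = -\D v_0$ there, with both sides analytic (and $v_0$ analytic there by elliptic analytic regularity for the overdetermined elliptic pair $(\D,\delta)$). Analytic continuation of this identity through the connected region $\mathrm{int}(\widetilde{\Omega}) \setminus K$ yields an $(m-1)$-tensor $w$, locally analytic there, coinciding with $v_0$ outside $\Omega$ and satisfying $\D w = -h$ throughout. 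Setting $v := v_0 - w$ then gives $\D v = \D v_0 + h = f$ on $\mathrm{int}(\widetilde{\Omega}) \setminus K$ and $v \equiv 0$ on $\mathrm{int}(\widetilde{\Omega}) \setminus \Omega$ by construction.

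The main obstacle is the symbolic analysis in the middle step. For $m = 2$, \cite{KS} carries out the principal-symbol computation on the solenoidal subspace explicitly; for general $m$ one must identify the matrix principal symbol of $N_\chi$ acting on symmetric $m$-tensors, describe the orthogonal projector onto the solenoidal subspace at $(x_0,\xi_0)$, and verify analytic ellipticity of the compressed symbol. Once this symbolic input and the accompanying analytic stationary phase lemma are established, the parametrix construction and the layer-stripping conclusion follow essentially as in \cite{KS}. A secondary technical point is the single-valuedness of the analytic continuation producing $w$, which should be verified using either simple connectivity of $\mathrm{int}(\widetilde{\Omega}) \setminus K$ in dimensions $n \geq 3$, or a direct monodromy computation via the hypothesis $I^{0}h = 0$ in dimension $n = 2$.
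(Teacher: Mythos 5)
Your microlocal core --- Sj\"ostrand's complex stationary phase applied to $I^{0}$ on geodesics near a normal geodesic, with $\delta h=0$ supplying the extra equations needed for ellipticity on symmetric $m$-tensors --- is essentially the paper's Proposition \ref{S-U analogue}. But the way you assemble it into the theorem has two genuine gaps. First, it is not true that every $x_0\notin K$ and every covector $\xi_0$ at $x_0$ admit a $K$-avoiding geodesic through $x_0$ normal to $\xi_0$: already for $n=2$ and $K$ a geodesic ball, the geodesic through $x_0$ normal to a covector $\xi_0$ perpendicular to the ``radial'' direction is the radial geodesic itself and meets $K$. So ``varying the base point and covector'' does not yield analyticity of $h$ on all of $\text{int}(\widetilde{\O})\setminus K$; you only control $\text{WF}_{A}(h)$ over the (generally proper) subset of conormal directions reachable by $K$-avoiding geodesics. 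This is precisely why the theorem's conclusion is the weaker statement $f=\D v$ rather than analyticity or vanishing of $f^{s}$ outside $K$; your scheme, if it closed, would prove too much.

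Second, the final step is not a legitimate analytic continuation. Knowing $h$ analytic on $\text{int}(\widetilde{\O})\setminus K$ and $h=-\D v_0$ on the shell $\text{int}(\widetilde{\O})\setminus\overline{\O}$ does not produce a $w$ with $\D w=-h$ on the larger set: an analytic tensor on the shell need not extend analytically inward, and, more fundamentally, $\D$ is overdetermined, so local solvability of $\D w=-h$ requires integrability conditions on $h$ (a Saint-Venant type condition for $m\geq 2$, closedness for $m=1$) that neither analyticity nor $\delta h=0$ supplies; a generic analytic solenoidal field is not locally potential. Note also that the global solenoidal part $h$ of a field supported in $\O$ is in general not supported in $\O$, so no support-shrinking argument applies to it either. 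The paper avoids both problems by constructing $v$ locally, in a tubular neighborhood of each $K$-avoiding geodesic, as the solution of an ODE system along the geodesics $x^\prime=\text{const}$ (Proposition \ref{prop:Construction of ODE}), so that $h=f-\D v$ is supported in $\O$ and satisfies $h_{i_1\dots i_{m-1}n}=0$; the microlocal regularity (Proposition \ref{Prop:Venky's Analogue} together with Lemma \ref{Lemma:Wavefront}) is then used only conormally to the available geodesics, the Sato-Kawai-Kashiwara theorem (Lemma \ref{S-K-K}) and a continuity argument over cones of geodesics force $h=0$ there, and a homotopy/patching argument glues the local potentials. You would need to reorganize your argument along these lines, or else supply the missing integrability and extension steps, for the proof to close.
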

\noindent Here we would like to mention that this theorem has been shown to be true for the case $m=2$ in \cite{KS}.
	\begin{theorem}\label{Th:main Theorem}
		Let $f$ be a symmetric $m$-tensor field on a simple real analytic manifold $(\O,g)$ with components in $\mathcal{E}^\prime(\widetilde{\O})$ and supported in $\Omega$ and $K$ be a closed geodesically convex subset of $\O$. If for each geodesic $\gamma$ not intersecting $K$, we have that $I^{q}f(\gamma) = 0$ for $q = 0,1,\dots ,m$ then $supp(f) \subset K$.
	\end{theorem}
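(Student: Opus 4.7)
The plan is to prove Theorem~\ref{Th:main Theorem} by induction on the tensor order $m$. The base case $m=0$ reduces to a function version of the support theorem, established by Krishnan in \cite{K1}: if $I^{0}f$ vanishes on every geodesic missing $K$ and $f$ is a function, then $\mathrm{supp}(f)\subset K$. For the inductive step, fix $m\geq 1$ and assume the statement is known for all symmetric tensor fields of order at most $m-1$.

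Since in particular $I^{0}f(\gamma)=0$ for every geodesic $\gamma$ not meeting $K$, Theorem~\ref{Th:Extension theorem} produces a symmetric $(m-1)$-tensor field $v$ with components in $\mathcal{D}^{\prime}(\mathrm{int}(\widetilde{\O})\setminus K)$ satisfying $f=\D v$ on $\mathrm{int}(\widetilde{\O})\setminus K$ and $v=0$ on $\mathrm{int}(\widetilde{\O})\setminus\O$. On any geodesic $\gamma$ disjoint from $K$, the fact that $\dot\gamma$ is parallel along $\gamma$ gives the pointwise identity
\[
\langle(\D v)(\gamma(t)),\dot\gamma^{m}(t)\rangle \;=\; \frac{d}{dt}\langle v(\gamma(t)),\dot\gamma^{m-1}(t)\rangle,
\]
since the symmetrization in the definition of $\D v$ is invisible when contracted with the fully symmetric power $\dot\gamma^{m}$. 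Because $v$ vanishes on a neighborhood of each endpoint of $\gamma$ inside $\widetilde{\O}$, integration by parts against $t^{q}$ eliminates the boundary contributions and yields, for each $q\geq 1$,
\[
I^{q}f(\gamma)\;=\;I^{q}(\D v)(\gamma)\;=\;-q\,I^{q-1}v(\gamma).
\]
The hypotheses $I^{q}f(\gamma)=0$ for $q=1,\ldots,m$ therefore force $I^{p}v(\gamma)=0$ for $p=0,1,\ldots,m-1$ along every geodesic avoiding $K$.

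Extending $v$ by zero across $K$ (which is consistent with $v$ already vanishing in $\mathrm{int}(\widetilde{\O})\setminus\O$) yields a symmetric $(m-1)$-tensor field whose components lie in $\mathcal{E}^{\prime}(\widetilde{\O})$ and whose support is contained in $\overline{\O}$, and whose first $m$ integral moments vanish on every geodesic disjoint from $K$. The inductive hypothesis applied to this extension gives $\mathrm{supp}(v)\subset K$, so $v\equiv 0$ on $\mathrm{int}(\widetilde{\O})\setminus K$. Since $f=\D v$ there, it follows that $f\equiv 0$ on $\mathrm{int}(\widetilde{\O})\setminus K$, which gives $\mathrm{supp}(f)\subset K$ and closes the induction.

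The main obstacle is the careful distributional justification of the recursion step. The identity $I^{q}(\D v)=-qI^{q-1}v$ has to be made sense of when $v$ is only a distribution on $\mathrm{int}(\widetilde{\O})\setminus K$: one needs the restriction of $v$ to individual geodesics to be well defined, which is a wavefront-set transversality statement that uses the simplicity of $(\O,g)$, and one needs the distributional vanishing of $v$ just outside $\partial\O$ to control the boundary term at the endpoints. The other technical point is ensuring that the zero extension of $v$ across $K$ produces a genuine element of $\mathcal{E}^{\prime}(\widetilde{\O})$ to which the inductive hypothesis may be applied; the geodesic convexity of $K$ and the structural properties of the potential produced by Theorem~\ref{Th:Extension theorem} are what make this extension legitimate.
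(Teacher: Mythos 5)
Your proposal is correct and follows essentially the same route as the paper: a single application of Theorem~\ref{Th:Extension theorem} to peel off one symmetrized derivative, the integration-by-parts identity $I^{q}(\D v)=-q\,I^{q-1}v$ (the paper's Lemma~\ref{Lemma:iteration}), and Krishnan's function-level support theorem as the terminal case; the paper merely unrolls your induction into an explicit chain $f=\D v_1$, $v_1=\D v_2,\dots$ ending at a $0$-tensor $v_m$. The extension point you flag is handled in the paper by the remark following Theorem~\ref{Th:Extension theorem} asserting that the potential $v$ it produces already has components in $\mathcal{E}^{\prime}(\widetilde{\O})$ and is supported in $\O$, which is exactly what legitimizes re-applying the argument at the next order.
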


	\begin{theorem}\label{Th:Injectivity}
		Let $(\O,g)$ be a simple real analytic manifold and $g$ is real analytic in a neighborhood of $\text{cl}({\O})$. If for a symmetric $m$-tensor field $f$ with components in $L^2(\O)$, we have that $I^qf = 0$ for $q = 0,1,\dots ,m$. Then $f=0$.
	\end{theorem}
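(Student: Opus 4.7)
My plan for Theorem~\ref{Th:Injectivity} is to combine two ingredients: (a) an integration-by-parts identity that expresses $I^{q}$ of a potential tensor field in terms of $I^{q-1}$ of its generator, and (b) the s-injectivity of the ordinary geodesic ray transform $I^{0}$ on symmetric $k$-tensor fields for every $k\ge 0$ on a simple real-analytic manifold. With these in hand, the theorem follows by an $(m+1)$-step induction that peels off one order of tensor at each stage.

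The first ingredient is the identity: for a symmetric $k$-tensor field $v$ on $\O$ with $v|_{\PD\O}=0$ and any integer $q\ge 1$,
\[ I^{q}(\D v)(\gamma)=-q\,I^{q-1}(v)(\gamma) \]
along every maximal geodesic $\gamma$. The key pointwise fact is $\langle \D v(\gamma(t)),\dg^{k+1}(t)\rangle=\tfrac{d}{dt}\langle v(\gamma(t)),\dg^{k}(t)\rangle$, which holds because $\nabla_{\dg}\dg=0$ along a geodesic and because the symmetrization in the definition of $\D$ is absorbed by the already-symmetric tuple $\dg^{k}$. Multiplying by $t^{q}$ and integrating by parts over $[0,l(\gamma)]$ produces the formula; both boundary terms vanish, at $t=0$ by virtue of $q\ge 1$ and at $t=l(\gamma)$ by virtue of $v|_{\PD\O}=0$.

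The second ingredient, which I will prove in Section~5 following the strategy of \cite{SU2}, is that if $h\in L^{2}(S^{k}(\O))$ satisfies $I^{0}h=0$ then the solenoidal part $h^{s}$ of $h$ vanishes, equivalently $h=\D u$ for some $u\in H^{1}(S^{k-1}(\O))$ with $u|_{\PD\O}=0$. For $k=0$ this degenerates to $h=0$, since a scalar function is its own solenoidal part. Granted this, I will construct inductively, for $k=0,1,\dots,m-1$, symmetric $(m-1-k)$-tensor fields $v_{k}$ vanishing on $\PD\O$ with $v_{k-1}=\D v_{k}$ (setting $v_{-1}:=f$). The base step is $f=\D v_{0}$ from $I^{0}f=0$. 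For the inductive step, iterating the identity $k+1$ times gives
\[ I^{k+1}f=I^{k+1}(\D v_{0})=(-1)^{k+1}(k+1)!\,I^{0}(v_{k}), \]
so the hypothesis $I^{k+1}f=0$ forces $I^{0}(v_{k})=0$, and s-injectivity yields $v_{k+1}$ with $v_{k}=\D v_{k+1}$ and $v_{k+1}|_{\PD\O}=0$. At $k=m-1$ the field $v_{m-1}$ is a scalar function; the degenerate form of s-injectivity gives $v_{m-1}=0$, and back-substituting up the chain yields $f=\D v_{0}=0$.

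The main obstacle is the second ingredient above: extending the analytic microlocal proof of s-injectivity in \cite{SU2} from $k=2$ to arbitrary $k$. The essential task is to identify the analytic principal symbol of the normal operator $N^{0}=(I^{0})^{*}I^{0}$ restricted to the solenoidal subspace and verify its ellipticity there, so that $I^{0}h=0$ forces analytic regularity of $h^{s}$ across $\PD\O$ and then $h^{s}=0$ follows by unique continuation from the complement of $\O$. Once this is available, the identity of Step~1 and the induction above are mechanical.
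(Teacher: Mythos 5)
Your proposal is correct and follows essentially the same route as the paper: the integration-by-parts identity $I^{q}(\D v)=-q\,I^{q-1}v$ for $v|_{\partial\O}=0$ is exactly the paper's Lemma~\ref{Lemma:iteration}, and the induction that peels off one potential layer per moment, terminating with injectivity of $I^{0}$ on functions, is the paper's argument written sequentially rather than via the one-shot decomposition $f=\sum_{i=0}^{m}\D^{i}v_{i}$ of Theorem~\ref{Decomposition Result} (the two formulations are equivalent, since that decomposition is obtained by iterating Sharafutdinov's theorem). The paper likewise imports the s-injectivity of $I^{0}$ on symmetric $k$-tensors from \cite{SU2}, noting the proof there extends to arbitrary order, which matches the ingredient you flag as the remaining obstacle.
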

	\noindent Here we would like to comment that the Theorem \ref{Th:Injectivity} also follows as a corollary of Theorem \ref{Th:main Theorem} when $f$ is supported in $\O$, however as we show in this paper that it can also be proved independently using $s$-injectivity of ray transform where we say $I^0 =I$ is $s$-injective if $If=0$ implies $f^s=0$. In the next section we will prove a proposition and some lemmas that will be needed for the proofs of our main theorems.
	 \section{Preliminaries}\label{Section: Preliminaries}
	 
 \par \noindent We will now prove some results which are analogue of some results already proved for the case of symmetric 2-tensor fields in \cite{KS}. These will be needed later in the proof of our main theorems. 
 \newline Fix a maximal geodesic $\gamma_0$ connecting $x_0 \neq y_0$ in the closure of $\widetilde{\O}$. We construct normal coordinates $x = (x^\prime, x^n)$ at $x_0$ in $\widetilde{\O}$ so that $x^n$ is the distance to $x_0$, and $\frac{\partial}{\partial x^n}$ is normal to $\frac{\partial}{\partial x^\alpha}$, $\alpha <n$, see \cite[Section 2]{SU1}. In these coordinates, the metric $g$ satisfies $g_{ni} = \delta_{ni}$, for all $i$, and the Christoffel symbols satisfy $\Gamma^i_{nn} = \Gamma_{in}^n= 0$. Under these coordinates lines of the type $x^\prime = \text{constant}$ are now geodesics with $x^n$ as arc length parameter. 
\par \noindent Let $U$ be a tubular neighborhood of $\gamma_0$ in $\O$, $U =\{(x^\prime,x^n):|x^\prime|<\epsilon$, $a(x^\prime)\leq x^n \leq b(x^\prime)\}$,
where $\partial \O$ is locally given by $x^n = a(x^\prime)$ and $x^n = b(x^\prime)$. In the next proposition, we prove that  for a symmetric $m$- tensor field $f$, one can always construct an $(m-1)$-tensor field $v$ in $U$ such that for
$$h :=f - \D v$$
one has 
$$ h_{i_1\dots i_{m-1}n} =  0, \quad \text{ for all possible values of }i_j \text{ and } v(x^\prime,a(x^\prime))= 0.$$ 
 $\widetilde{U}$ denotes the tubular neighborhood of $\gamma_0$ of the same type but in $\widetilde{\O}$.
 \begin{remark}
 Numbers of $n$ in the suffix of the tensor $v_{n\dots ni_1\dots i_k}$ will be clear from the order of the tensor $v$. For example, if $ v$ is a $m$- tensor then 
 $$ v_{n\dots ni_1\dots i_k} = v_{\underbrace{n\dots n}_{m-k-times}i_1\dots i_k}.$$
 \end{remark}
	 \begin{prop}\label{prop:Construction of ODE}
	 	Let $f$ be a symmetric $m$-tensor field then there exists a unique $(m-1)$-tensor field $v$ such that $v(x^\prime,a(x^\prime))= 0$ and for $h =f -\D v$, we have 
	 	$$ h_{i_1\dots i_{m-1}n} =  0, \quad \text{ for all possible values of }i_j.$$ 
	 \end{prop}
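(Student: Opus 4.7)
The plan is to reduce the condition $h_{i_1\dots i_{m-1}n}=0$, which by the symmetry of $h$ is equivalent to the vanishing of all components of $h$ having at least one index equal to $n$, to a triangular system of first-order linear ODEs in $x^n$ for the components of $v$, organized by the number of indices equal to $n$, and then to solve this system recursively with the initial condition $v(x',a(x'))=0$.

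First I would expand the symmetrized covariant derivative
$$(\D v)_{i_1 \dots i_m} = \frac{1}{m}\sum_{k=1}^m \nabla_{i_k} v_{i_1 \dots \widehat{i_k} \dots i_m}.$$
For a component of $h$ with exactly $s\ge 1$ indices equal to $n$, writing the non-$n$ indices as $\alpha_1,\dots,\alpha_{m-s}\in\{1,\dots,n-1\}$, the equation $h_{\alpha_1\dots\alpha_{m-s}n\dots n}=0$ becomes
$$s\,\nabla_n v_{\alpha_1\dots\alpha_{m-s}n\dots n} \;=\; m f_{\alpha_1\dots\alpha_{m-s}n\dots n} - \sum_{j=1}^{m-s}\nabla_{\alpha_j}v_{\alpha_1\dots\widehat{\alpha_j}\dots\alpha_{m-s}n\dots n},$$
in which the $v$ on the left carries $s-1$ indices equal to $n$ and each $v$ on the right carries $s$. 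The identities $\Gamma^p_{nn}=0$ and $\Gamma^n_{in}=0$, available from the special choice of normal coordinates, guarantee that upon expanding $\nabla_n v$ and $\nabla_{\alpha_j}v$ into partials and Christoffel contractions no term ever raises the number of $n$-indices in the tensorial argument.

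I would then sweep $s$ from $m$ down to $1$. The base step $s=m$ collapses to the scalar ODE $\partial_n v_{n\dots n}=f_{n\dots n}$, uniquely solved under $v_{n\dots n}(x',a(x'))=0$ by one-dimensional integration. Inductively, once every $v$-component carrying at least $s$ $n$-indices has been constructed, the equations at level $s$ form a closed, coupled, first-order linear ODE system in $x^n$ for the components of $v$ with exactly $s-1$ $n$-indices: its forcing contains only $f$ and earlier-stage $v$-components, while couplings among the current unknowns enter only algebraically through Christoffel symbols and never through the $x^n$-derivative. Standard existence and uniqueness for linear first-order systems, together with the vanishing initial data on $\{x^n=a(x')\}$, produces unique smooth solutions at this stage, and after $m$ stages $v$ is completely determined on $U$.

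The only nontrivial technical content is the index bookkeeping: at each stage one must check that every Christoffel contribution which would couple a current unknown to a $v$-component with strictly fewer $n$-indices is annihilated by $\Gamma^n_{in}=0$ or $\Gamma^p_{nn}=0$, and that every contribution coupling to a $v$-component with strictly more $n$-indices feeds the forcing through quantities already constructed at an earlier stage. Once this triangular structure is verified, both the existence and the uniqueness asserted in the proposition follow directly from linear ODE theory.
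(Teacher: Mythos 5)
Your proposal is correct and follows essentially the same route as the paper: both reduce $h_{i_1\dots i_{m-1}n}=0$ to a triangular system of first-order linear ODEs in $x^n$ for the components of $v$, graded by the number of indices equal to $n$, using $\Gamma^i_{nn}=\Gamma^n_{in}=0$ in the semi-geodesic coordinates to keep the system triangular, and solving recursively with the zero initial data on $x^n=a(x')$. The only cosmetic difference is that the paper packages the index bookkeeping into an explicit expansion formula (its Lemma~\ref{Lemma:expansion of dv}, proved in the appendix), whereas you argue structurally from $(\D v)_{i_1\dots i_m}=\frac{1}{m}\sum_k \nabla_{i_k}v_{i_1\dots \widehat{i_k}\dots i_m}$.
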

	 To prove this proposition, we need the following lemma for which we provide a proof in the Appendix section:
	 \begin{Lemma}\label{Lemma:expansion of dv}
	 	Let $ v$ be a symmetric $(m-1)$-tensor field. Then for any $0\leq k\leq m$, we have 
	 	\begin{align*}
	 	(\D v)_{n\dots n i_k\dots i_1} &=\frac{(m-k)}{m}\frac{\partial v_{n\dots ni_k\dots i_1}}{\partial x^n} -\frac{2(m-k)}{m} \sum_{l=1}^k\Gamma_{i_l n}^p v_{n\dots n i_k\dots \hat{i_l}\dots i_1 p}\\ 
	 	&\quad +\frac{1}{m}\sum_{l=1}^k \frac{\partial v_{n\dots ni_k\dots \hat{i_l}\dots  i_1}}{\partial x^{i_l}}-\frac{2}{m} \sum_{l,q=1, l \neq q}^k\Gamma_{i_l i_q}^p v_{n\dots n i_k\dots \hat{i_l}\dots\hat{i_q}\dots i_1 p}.
	 	\end{align*}
	 \end{Lemma}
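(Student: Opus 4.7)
The plan is to expand the symmetrized covariant derivative $\D v = \sigma(\nabla v)$ directly in terms of partial derivatives and Christoffel symbols, and then exploit the symmetry of $v$ together with the special normal-coordinate identities $\Gamma^p_{nn}=0$ and $g_{ni}=\delta_{ni}$ (set up just before the proposition) to collapse the repeated contributions. I would begin from the identity
$$(\D v)_{j_1\dots j_m} = \frac{1}{m}\sum_{p=1}^m \nabla_{j_p} v_{j_1\dots \hat{j_p}\dots j_m},$$
which holds because $\nabla v$ is already symmetric in its last $m-1$ arguments by the symmetry of $v$. Substituting the multi-index $(n,\dots,n,i_k,\dots,i_1)$ with $m-k$ entries equal to $n$, I would split the outer sum over $p$ according to whether $j_p$ is an $n$-slot or an $i_l$-slot.

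The $m-k$ terms in which $j_p=n$ all coincide (by symmetry of $v$) and produce $\frac{m-k}{m}\,\nabla_n v_{n\dots n i_k\dots i_1}$; the $k$ terms in which $j_p=i_l$ give $\frac{1}{m}\sum_{l=1}^k \nabla_{i_l} v_{n\dots n i_k\dots \hat{i_l}\dots i_1}$. I would then expand each covariant derivative via $\nabla_j T_{J_1\dots J_{m-1}} = \partial_j T_{J_1\dots J_{m-1}} - \sum_s \Gamma^p_{jJ_s}\, T_{J_1\dots p\dots J_{m-1}}$. For $j=n$, the identity $\Gamma^p_{nn}=0$ kills every Christoffel contribution coming from an $n$-slot in the remaining indices, leaving only the $k$ terms $-\Gamma^p_{n i_l}\, v_{n\dots n i_k\dots \hat{i_l}\dots i_1 p}$. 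For $j=i_l$, the $m-k$ remaining $n$-slots each contribute identically (again by symmetry of $v$) and sum to $-(m-k)\,\Gamma^p_{i_l n}\, v_{n\dots n i_k\dots \hat{i_l}\dots i_1 p}$, while the remaining $k-1$ non-$n$ slots yield the cross terms $-\sum_{q\neq l} \Gamma^p_{i_l i_q}\, v_{n\dots n i_k\dots \hat{i_l}\dots \hat{i_q}\dots i_1 p}$.

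Collecting everything, the two separate sources of $\Gamma^p_{i_l n}v$-type contributions — one with coefficient $\frac{m-k}{m}$ from the $\nabla_n$ expansion, and another $\frac{m-k}{m}$ from the $n$-position terms inside $\nabla_{i_l}$ — combine via the torsion-free symmetry $\Gamma^p_{n i_l}=\Gamma^p_{i_l n}$ to produce exactly the coefficient $-\frac{2(m-k)}{m}$ in the claimed formula. The partial-derivative terms $\frac{m-k}{m}\partial_n v_{n\dots n i_k\dots i_1}$ and $\frac{1}{m}\sum_l \partial_{i_l}v_{n\dots n i_k\dots \hat{i_l}\dots i_1}$ appear directly; the cross terms in $\Gamma^p_{i_l i_q}$ land in the final double sum.

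The main obstacle here is not any hard analysis but careful combinatorial bookkeeping of how the $m-k$ n-slots and $k$ ordinary slots interact under symmetrization. The delicate points are (i) genuinely using $\Gamma^p_{nn}=0$ to eliminate the $m-k-1$ would-be Christoffel terms in the $\nabla_n$ expansion, leaving only $k$; (ii) tracking the multiplicity $m-k$ produced by summing over $n$-positions inside $\nabla_{i_l}$, which accounts for the clean factor $\frac{m-k}{m}$ from the second source; and (iii) fixing the precise convention for the final double sum $\sum_{l,q=1,l\neq q}^k$ — in particular, whether each unordered pair is counted once or twice — so that the stated numerical coefficient $-\frac{2}{m}$ emerges cleanly from the combined Christoffel contributions.
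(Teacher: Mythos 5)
Your proposal is correct, and it takes a genuinely different and more economical route than the paper. The paper starts from the definition $\D v=\sigma(\nabla v)$ and grinds through the symmetrization operator by peeling off one index at a time: it verifies the cases $k=0,1,2$ explicitly and then, for general $k$, splits the expression into three pieces $J$, $J^1_k$, $J^2_k$ and iterates the expansion of $\sigma$ roughly $k$ times for each piece, tracking the coefficients as they accumulate. Your starting point, the identity $(\D v)_{j_1\dots j_m}=\frac{1}{m}\sum_{p=1}^m \nabla_{j_p}v_{j_1\dots \hat{j_p}\dots j_m}$, is valid precisely because $\nabla v$ is symmetric in the arguments of $v$, and it collapses the entire symmetrization into a single sum over which slot carries the derivative; from there the bookkeeping you describe (the $m-k$ coincident $n$-slot terms, the killing of the $\nabla_n$ Christoffel terms on $n$-slots via $\Gamma^p_{nn}=0$, the multiplicity $m-k$ from the $n$-slots inside $\nabla_{i_l}$, and the doubling $\Gamma^p_{ni_l}=\Gamma^p_{i_ln}$ that produces $-\tfrac{2(m-k)}{m}$) is exactly right and replaces several pages of computation with a few lines. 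Regarding the convention issue you flag for the final double sum: your derivation yields the coefficient $-\tfrac{1}{m}$ per \emph{ordered} pair $(l,q)$, $l\neq q$, equivalently $-\tfrac{2}{m}$ per unordered pair; comparison with the paper's own explicit $k=2$ case, where the term is $-\tfrac{2}{m}\Gamma^p_{ij}v_{n\dots np}$, shows that the stated formula must be read with the unordered-pair convention, so your answer agrees with the intended statement and the ambiguity lies in the paper's notation rather than in your computation.
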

	 
	 Now, let us come back to the proof of Proposition \ref{prop:Construction of ODE}.
	 \begin{proof}[Proof of Proposition \ref{prop:Construction of ODE}]
	 	Let us first recall the following definition:
	 	\begin{align*}
	 	(\D v)_{i_1\dots i_m} = \sigma(i_1,\dots,i_m)\left(\frac{\partial v_{i_1\dots i_{m-1}}}{\partial x^{i_m}}-\sum_{l=1}^{m-1}\Gamma_{i_m i_l}^pv_{i_1,\dots i_{l-1} p i_{l+1}\dots i_{m-1}}\right)
	 	\end{align*}
	 	where $\sigma$ is a the symmetrization operator.\\
	 	Proving $$ h_{i_1\dots i_{m-1}n} =  0$$ is equivalent to proving the existence of a $(m-1)-$tensor field $v$ such that
	 	\begin{align*}
 (\D v)_{i_1\dots i_{m-1}n}&= f_{i_1\dots i_{m-1}n}.
	 	\end{align*}
	 	First we consider
	 	\begin{align*}
	\frac{\partial v_{n\dots n}}{\partial x^n} = f_{n\dots n}.
	 	\end{align*}
	 	 We will solve this equation together with the initial condition $	v_{n\dots n}(x^\prime,a(x^\prime))= 0$ to get $v_{n\dots n}$.
	 After solving for $v_{n\dots n}$ we will consider
	 \begin{align*}
 \hspace{5cm} (\D v)_{n\dots n i} &=  f_{n\dots n i} \\
	\Rightarrow \hspace{1.9cm}	\frac{\partial v_{ n\dots n i}}{\partial x^n}(x) - 2\Gamma_{in}^p v_{n\dots n p}(x) &=\frac{m}{m-1} f_{ n\dots n i}(x)-\frac{1}{m-1}\frac{\partial v_{n\dots n}}{\partial x^{i}}(x).
	 \end{align*}
	 Now we will solve this system of equations together with the initial conditions $v_{n\dots n i}(x^\prime,a(x^\prime)) = 0$ to get $v_{n\dots n i}$.
	 	 	\par Proceeding in a similar manner let us assume that for a given $k$ such that $0 \leq (k-1) \leq (m-1)$, we have already found $v_{n\dots ni_{k-1}\dots i_1}$ for which $h_{n\dots ni_{k-1}\dots i_1} = f_{n\dots ni_{k-1}\dots i_1} - (\D v)_{n\dots ni_{k-1}\dots i_1} = 0$. If $(k-1) = (m-1)$ then we are done and if not then we can find $v_{n\dots ni_k\dots i_1}$ in the following manner. Using Lemma \ref{Lemma:expansion of dv}, we can construct the following system of equations for $h_{n\dots ni_{k}\dots i_1} = 0$.
	 \begin{align*}
	 	\frac{\partial v_{n\dots ni_k\dots i_1}}{\partial x^n}(x) -2 \sum_{l=1}^k\Gamma_{i_l n}^p v_{n\dots n i_k\dots \hat{i_l}\dots i_1 p}(x) &=	\frac{1}{(m-k)}\left\{ m f_{n\dots ni_k\dots i_1}(x)-\sum_{l=1}^k \frac{\partial v_{n\dots ni_k\dots \hat{i_l}\dots  i_1}}{\partial x^{i_l}}(x)\right.\\
	 	&\quad +\left. 2 \sum_{l,q=1, l \neq q}^k\Gamma_{i_l i_q}^p v_{n\dots n i_k\dots \hat{i_l}\dots\hat{i_q}\dots i_1 p}(x)\right\}.
	 \end{align*}
	 Finally, we will solve the above system of equations with the initial conditions 	$v_{n\dots ni_k\dots i_1}(x^\prime,a(x^\prime)) = 0$ to get $v_{n\dots ni_k\dots i_1}$ uniquely. We repeat the same process till $k=(m-1)$ to prove the proposition.
 \end{proof}
\begin{Lemma}\label{Lemma:Different definition of v}
Let $f$ be supported in $\O$, and $I^{0}f(\gamma) = 0$ for all maximal geodesics in $\widetilde{U}$ belonging to some neighborhood of the geodesics $x_0 = const$. Then $v = 0$ in $int(\widetilde{U})\setminus \O$.
\end{Lemma}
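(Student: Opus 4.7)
The plan is to combine the transport identity for the symmetrized covariant derivative with the structural property of $h$ from Proposition~\ref{prop:Construction of ODE}. Along any geodesic $\gamma$, the fact that $\nabla_{\dot\gamma}\dot\gamma = 0$ gives
\[
\langle \D v, \dot\gamma^m\rangle \;=\; \frac{d}{dt}\langle v, \dot\gamma^{m-1}\rangle,
\]
so integrating and writing $h = f - \D v$ yields
\[
I^0 f(\gamma) \;=\; \int_0^{l(\gamma)} \langle h, \dot\gamma^m\rangle\, dt \;+\; \langle v(\gamma(l)), \dot\gamma^{m-1}(l)\rangle \;-\; \langle v(\gamma(0)), \dot\gamma^{m-1}(0)\rangle.
\]
First I would extend $v$ to all of $\widetilde U$ by solving the ODE system of Proposition~\ref{prop:Construction of ODE}. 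Since $f$ is supported in $\O$, this system has zero forcing on the near side $x^n \le a(x^\prime)$, and the initial condition $v(x^\prime, a(x^\prime)) = 0$ forces $v\equiv 0$ there. Hence for any maximal geodesic in $\widetilde U$ whose start point lies on the near part of $\partial \widetilde U$, the term at $\gamma(0)$ vanishes, and the hypothesis $I^0 f(\gamma) = 0$ reduces the identity above to
\[
\langle v(\gamma(l)), \dot\gamma^{m-1}(l)\rangle \;=\; -\int_0^{l(\gamma)} \langle h, \dot\gamma^m\rangle\, dt.
\]

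Next, the structural identity $h_{i_1\cdots i_{m-1}n} = 0$ from Proposition~\ref{prop:Construction of ODE} together with the symmetry of $h$ forces every component of $h$ containing an $n$-index to vanish, so $\langle h, \dot\gamma^m\rangle$ is a homogeneous polynomial of degree $m$ in the components $\dot\gamma^\alpha$ with $\alpha < n$. I would then fix a point $x_0 = (c_0, \tilde b(c_0))$ on the far boundary of $\widetilde U$ and apply the previous identity to the family of geodesics that exit $\widetilde U$ at $x_0$ with exit velocity $w \in S^{n-1}$ varying in a neighborhood of $\partial/\partial x^n$. For such $w$, continuity places every geodesic in the hypothesis neighborhood and yields $\dot\gamma^\alpha(t) = O(|w - \partial_n|)$ uniformly in $t$. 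Consequently the right-hand side is $O(|w-\partial_n|^m)$, while the left-hand side equals the homogeneous polynomial
\[
P(w) \;:=\; \langle v(x_0), w^{m-1}\rangle
\]
of degree $m-1$ in $w$.

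Parameterizing $w$ on $S^{n-1}$ near $\partial_n$ by $u = (w^\alpha)_{\alpha<n}$ with $w^n = \sqrt{1 - |u|^2}$, the condition $P(w(u)) = O(|u|^m)$ forces the vanishing of all Taylor coefficients of the analytic function $P(w(u))$ in $u$ up to order $m-1$. Expanding
\[
P(w(u)) \;=\; \sum_{k=0}^{m-1}\binom{m-1}{k}\,v_{\underbrace{n\cdots n}_{m-1-k}\alpha_1\cdots\alpha_k}(x_0)\,u^{\alpha_1}\cdots u^{\alpha_k}\,(1-|u|^2)^{(m-1-k)/2},
\]
and inducting on $k = 0, 1, \dots, m-1$, the $(1-|u|^2)^{(m-1-k)/2}$ correction at order $k$ only involves the already-handled components $v_{n\cdots n\beta_1\cdots\beta_j}$ with $j<k$, so each step cleanly isolates $v_{n\cdots n\alpha_1\cdots\alpha_k}(x_0) = 0$ for all $\alpha_j<n$. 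By symmetry these components span all of $v(x_0)$, hence $v(x_0) = 0$; varying $c_0$ over the relevant neighborhood gives $v \equiv 0$ on an open piece of $\{x^n = \tilde b(x^\prime)\}$.

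Finally, in the far-side region $b(x^\prime) \le x^n \le \tilde b(x^\prime)$ the source $f$ vanishes, so the system of Proposition~\ref{prop:Construction of ODE} is linear and homogeneous in $v$ with real-analytic coefficients. The Cauchy data $v\equiv 0$ on $\{x^n = \tilde b(x^\prime)\}$ just established then forces $v\equiv 0$ throughout the far side, either by inductively showing that all normal derivatives of $v$ vanish on the initial surface and invoking analyticity, or directly by Cauchy--Kowalewski uniqueness for the first-order system. Combined with the near-side vanishing noted at the outset, this gives $v = 0$ in $\mathrm{int}(\widetilde U)\setminus \O$, as required. The most delicate step is the polynomial extraction in the third paragraph: one has to bookkeep the binomial factors coming from the sphere substitution $w^n = \sqrt{1-|u|^2}$ so that the vanishing at each order $k$ really isolates $v_{n\cdots n\alpha_1\cdots\alpha_k}$ after subtracting the previously obtained lower-order contributions.
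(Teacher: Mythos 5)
Your argument is correct for smooth $f$, but it reaches the conclusion by a genuinely different route than the paper. The paper sidesteps the boundary-term identity entirely: it introduces the function $u(x,\xi)=\int_0^{l(x,\xi)}\langle f,\dot\gamma_{x,\xi}^m\rangle\,dt$, defines $v$ invariantly as $v_{i_1\dots i_{m-1}}=\frac{1}{(m-1)!}\partial_{\xi^{i_1}}\cdots\partial_{\xi^{i_{m-1}}}u|_{\xi=e_n}$, verifies (via the geodesic generator $G$ and Claim 1) that this $v$ satisfies $h_{i_1\dots i_{m-1}n}=0$ and the same initial condition, hence coincides with the ODE-constructed $v$ of Proposition \ref{prop:Construction of ODE}; the conclusion is then immediate because $u(x,\xi)\equiv 0$ for $x\in \mathrm{int}(\widetilde{U})\setminus\O$ and $\xi$ near $e_n$ (the forward integral is either the full vanishing ray transform or an integral entirely outside $\mathrm{supp}\,f$), so all its $\xi$-derivatives vanish. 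You instead keep the ODE definition of $v$, use the fundamental-theorem identity to isolate the boundary term $\langle v(x_0),w^{m-1}\rangle$, show the remainder is $O(|w-e_n|^m)$ because every component of $h$ carrying an $n$-index vanishes by symmetry, and recover the components of $v(x_0)$ as Taylor coefficients in the sphere parameter $u$; this is morally the same information (your order-$k$ coefficients are exactly the paper's $\xi$-derivatives of $u$ at $e_n$), but extracted ``from the boundary'' at the cost of the binomial bookkeeping with $(1-|u|^2)^{(m-1-k)/2}$ and an extra propagation step by ODE uniqueness through the far region, where the paper gets the vanishing at every exterior point in one stroke. Your version buys a more elementary, estimate-driven argument that never needs the homogeneity of $u$ in $\xi$ or the computation of $\partial_\xi^{m-1}Gw$. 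Two caveats: the invocation of Cauchy--Kowalewski is unnecessary (the triangular system is just a linear ODE in $x^n$ level by level, and lower-level components already vanish identically on the far side, so their $x'$-derivatives do too); and, like the paper, you still owe the extension from smooth $f$ to $f\in\mathcal{E}'(\widetilde{\O})$ --- the paper defers this to \cite[Lemma 3.1]{KS}, and your pointwise bounds on $h$ and pointwise evaluation of $v$ make that regularization step slightly less immediate than for the paper's duality-friendly definition of $u$, so it should at least be acknowledged.
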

\begin{proof}
First let $f\in C^{\infty}(\O)$. We will give another invariant definition of $v$ and use it to conclude our lemma. For any $x \in \widetilde{U}$ and any $\xi \in T_x\widetilde{U}\setminus \{0\}$ so that $\gamma_{x,\xi}$ stays in $\widetilde{U}$, we set
\begin{align}\label{eq:1}
	u(x,\xi) = \int_0^{l(x,\xi)} f_{i_1\dots i_m}(\gamma_{x,\xi}(t))\dot{\gamma}_{x,\xi}^{i_1}(t)\cdots \dot{\gamma}_{x,\xi}^{i_m}(t)dt.
\end{align}
Extend the definition of $\gamma_{x, \xi}$ for $\xi \neq 0$ as a solution of the geodesic equation. Then $u(x, \xi)$ is homogeneous of order $(m-1)$ in $\xi$. Consider 
\begin{align*}
	u(x,\lambda\xi) &= \lambda^{m-1}u(x,\xi)\\
	\Rightarrow\hspace{10mm} \xi^{j_1}\cdots \xi^{j_{m-1}}\frac{\partial^{m-1}}{\partial\xi^{j_1}\cdots \partial\xi^{j_{m-1}}}u(x,\lambda\xi)&= (m-1)!\ u(x,\xi), \quad \text{diff. }(m-1)\text{  times w.r.t }\lambda\\
	\Rightarrow\hspace{13mm} \xi^{j_1}\cdots \xi^{j_{m-1}}\frac{\partial^{m-1}}{\partial\xi^{j_1}\cdots \partial\xi^{j_{m-1}}}u(x,\xi)&= (m-1)!\ u(x,\xi), \text{ for } \lambda =1.
\end{align*}
Now, we shall define a symmetric $(m-1)$- tensor field $v$ as following:
\begin{align}\label{eq:2}
	v_{i_1\dots i_{m-1}}(x) = \frac{1}{(m-1)!}\left.\frac{\partial^{m-1}}{\partial\xi^{i_1}\cdots \partial\xi^{i_{m-1}}}u(x,\xi)\right|_{\xi = e_n}.
\end{align}
Consider for any $0\leq l\leq (m-1)$
\begin{align*}
	v_{i_1\dots i_{m-1-l}n\dots n}(x) &= \frac{1}{(m-1)!}\left.\frac{\partial^{m-1}}{\partial\xi^{i_1}\cdots \partial\xi^{i_{m-1-l}}\partial \xi^n\cdots \partial \xi^n}u(x,\xi)\right|_{\xi = e_n}\\
	&= \frac{1}{(m-1)!}\left.\xi^{j_1}\cdots \xi^{j_{l}}\frac{\partial^{m-1}}{\partial\xi^{i_1}\cdots \partial\xi^{i_{m-1-l}}\partial \xi^{j_1}\cdots \partial \xi^{j_l}}u(x,\xi)\right|_{\xi = e_n}\\
	&= \frac{l!}{(m-1)!}\left.\frac{\partial^{m-1}}{\partial\xi^{i_1}\cdots \partial\xi^{i_{m-1-l}}}u(x,\xi)\right|_{\xi = e_n}\quad (\text{ using homogenity of } u).
\end{align*}
Then, we have $$ v_{n\dots n}(x) = u(x, e_n).$$
\par \noindent We will now show that with this definition of $v$, for $h= f -\D v$, one has 
$$ h_{i_1\dots i_{m-1}n} =  0, \quad \text{ for all possible values of }i_j .$$
Define \begin{align}\label{eq:3}
	w(x,\xi) = \int_0^{l(x,\xi)} h_{i_1\dots i_m}(\gamma_{x,\xi}(t))\dot{\gamma}_{x,\xi}^{i_1}(t)\cdots \dot{\gamma}_{x,\xi}^{i_m}(t)dt.
\end{align} 

\begin{claim}\label{claim:1}
	Let $0 \leq l \leq (m-1)$ and $w(x,\xi)$ is defined as above then
	\begin{equation}\label{eq:4}
		\left.\frac{\partial^{l}}{\partial\xi^{j_1}\cdots \partial\xi^{j_{l}}}w(x,\xi)\right|_{\xi =e_n} = 0.
	\end{equation}
\end{claim}
\begin{proof} Consider for any $ 0\leq l\leq (m-1)$,
	\begin{align*}
		&\frac{\partial^{l}}{\partial\xi^{j_1}\cdots \partial\xi^{j_{l}}}w(x,\xi)\\
		&= \frac{\partial^{l}}{\partial\xi^{j_1}\cdots \partial\xi^{j_{l}}}u(x,\xi) 
		- \frac{\partial^{l}}{\partial\xi^{j_1}\cdots \partial\xi^{j_{l}}}\int_0^{l(x,\xi)} (\D v)_{i_1\dots i_m}(\gamma_{x,\xi}(t))\dot{\gamma}_{x,\xi}^{i_1}(t)\cdots \dot{\gamma}_{x,\xi}^{i_m}(t)dt\\
		&= \frac{\partial^{l}}{\partial\xi^{j_1}\cdots \partial\xi^{j_{l}}}u(x,\xi) 
		- \frac{\partial^{l}}{\partial\xi^{j_1}\cdots \partial\xi^{j_{l}}}\int_0^{l(x,\xi)}\frac{d}{dt} \left(v_{i_1\dots i_{m-1}}(\gamma_{x,\xi}(t))\dot{\gamma}_{x,\xi}^{i_1}(t)\cdots \dot{\gamma}_{x,\xi}^{i_{m-1}}(t)\right)dt\\
		&= \frac{\partial^{l}}{\partial\xi^{j_1}\cdots \partial\xi^{j_{l}}}u(x,\xi) 
		- \frac{\partial^{l}}{\partial\xi^{j_1}\cdots \partial\xi^{j_{l}}} \left(v_{i_1\dots i_{m-1}}(x)\xi^{i_1}\cdots \xi^{i_{m-1}}\right)\\
		&= \frac{\partial^{l}}{\partial\xi^{j_1}\cdots \partial\xi^{j_{l}}}u(x,\xi) 
		- \frac{(m-1)!}{l!}\  v_{j_1\dots j_{l}n\dots n}(x)
	\end{align*}
	\begin{align*}
		\Rightarrow\hspace{25mm} \left.\frac{\partial^{m-1}}{\partial\xi^{j_1}\cdots \partial\xi^{j_{l}}}w(x,\xi)\right|_{\xi =e_n} &= \left.\frac{\partial^{l}}{\partial\xi^{j_1}\cdots \partial\xi^{j_{l}}}u(x,\xi)\right|_{\xi =e_n} - \frac{(m-1)!}{l!}\  v_{j_1\dots j_{l}n\dots n}(x)\\
		&= \frac{(m-1)!}{l!}\ v_{j_1\dots j_{l}n\dots n}(x) - \frac{(m-1)!}{l!}\  v_{j_1\dots j_{l}n\dots n}(x)\\
		&=0.
	\end{align*}
\end{proof}
\noindent Now let us recall the following relation \cite[Section 1.2]{Sharafutdinov_Book}
\begin{align}\label{eq:5}
	Gw(x, \xi) =h_{i_1\dots i_m}(x)\xi^{i_1}\cdots \xi^{i_m}
\end{align}
where $G= \xi^i \partial_{x^i} - \Gamma^k_{ij}\xi^i\xi^j\partial_{\xi^k} $ is the generator of the geodesic flow. After differentiating \eqref{eq:5} $(m-1)$ times w.r.t. $\xi$, we get
\begin{align*}
	\frac{\partial^{m-1}}{\partial\xi^{j_1}\cdots \partial\xi^{j_{m-1}}}Gw (x,\xi)  = m!\ h_{j_1\dots j_{m-1}i}(x)\xi^i\\
\Rightarrow\hspace{20mm} \left.\frac{\partial^{m-1}}{\partial\xi^{j_1}\cdots \partial\xi^{j_{m-1}}}Gw (x,\xi)\right|_{\xi =e_n}  = m!\ h_{j_1\dots j_{m-1}n}(x).
\end{align*}
We will prove L.H.S. of the above equation is $0$. This will prove our lemma. Consider
\begin{align*}
	\frac{\partial Gw(x,\xi)}{\partial \xi^{j_1}}  &= \frac{\partial}{\partial \xi^{j_1}} \left(\xi^i \frac{\partial}{\partial x^i} w(x,\xi)\right) - \Gamma^k_{ij}\frac{\partial}{\partial \xi^{j_1}} \left(\xi^i\xi^j \frac{\partial}{\partial \xi^{k}}w(x,\xi)\right)\\
	&= \frac{\partial w(x,\xi)}{\partial x^{j_1}} + \xi^i \frac{\partial^2 w(x,\xi)}{\partial \xi^{j_1}\partial x^i}- \Gamma^k_{ij}\frac{\partial}{\partial \xi^{j_1}}(\xi^i\xi^j) \frac{\partial w(x,\xi)}{\partial \xi^{k}} - \Gamma^k_{ij}\xi^i\xi^j\frac{\partial^2 w(x,\xi)}{ \partial \xi^{j_1}\partial \xi^{k}}\\
	\Rightarrow\hspace{10mm} \frac{\partial^2 Gw(x,\xi)}{\partial \xi^{j_1}\partial \xi^{j_2}}  &= \frac{\partial^2 w(x,\xi)}{\partial x^{j_1}\partial \xi^{j_2}} + \frac{\partial^2 w(x,\xi)}{\partial \xi^{j_1}\partial x^{j_2}}+\xi^i \frac{\partial^3 w(x,\xi)}{\partial \xi^{j_1}\partial \xi^{j_2}\partial x^i}- \Gamma^k_{ij}\frac{\partial^2}{\partial \xi^{j_1}\partial \xi^{j_2}}(\xi^i\xi^j) \frac{\partial w(x,\xi)}{\partial \xi^{k}} \\
	&\quad - \Gamma^k_{ij}\frac{\partial}{\partial \xi^{j_1}}(\xi^i\xi^j)\frac{\partial^2 w(x,\xi)}{ \partial \xi^{j_2}\partial \xi^{k}}-\Gamma^k_{ij}\frac{\partial}{\partial \xi^{j_2}}(\xi^i\xi^j) \frac{\partial^2 w(x,\xi)}{ \partial \xi^{j_1} \partial \xi^{k}} - \Gamma^k_{ij}\xi^i\xi^j\frac{\partial^3 w(x,\xi)}{ \partial \xi^{j_1}\partial \xi^{j_2}\partial \xi^{k}}\\
	&= \frac{\partial^2 w(x,\xi)}{\partial x^{j_1}\partial \xi^{j_2}} + \frac{\partial^2 w(x,\xi)}{\partial \xi^{j_1}\partial x^{j_2}}+\xi^i \frac{\partial^3 w(x,\xi)}{\partial \xi^{j_1}\partial \xi^{j_2}\partial x^i}- 2\Gamma^k_{j_1j_2} \frac{\partial w(x,\xi)}{\partial \xi^{k}} \\
	&\quad - 2\Gamma^k_{ij_1}\xi^i\frac{\partial^2 w(x,\xi)}{ \partial \xi^{j_2}\partial \xi^{k}}-  2\Gamma^k_{ij_2}\xi^i\frac{\partial^2 w(x,\xi)}{ \partial \xi^{j_1}\partial \xi^{k}}- \Gamma^k_{ij}\xi^i\xi^j\frac{\partial^3 w(x,\xi)}{ \partial \xi^{j_1}\partial \xi^{j_2}\partial \xi^{k}}.
\end{align*}
Using similar calculations, we get
\begin{align*}
	\frac{\partial^{m-1} Gw (x,\xi)}{\partial\xi^{j_1}\cdots \partial\xi^{j_{m-1}}}&= \xi^i\frac{\partial^{m} w (x,\xi)}{\partial\xi^{j_1}\cdots \partial\xi^{j_{m-1}}\partial x^i}- \sum_{l,k=1,l\neq k}^{m-1}2\Gamma_{j_l j_p}^k\frac{\partial^{m-2}w (x,\xi) }{\partial \xi^{k}\partial\xi^{j_1}\cdots \partial \hat{\xi^{j_l}}\cdots \partial \hat{\xi^{j_p}}\cdots \partial\xi^{j_{m-1}}}\\
	&\quad + \sum_{l=1}^{m-1}\frac{\partial^{m-1}w (x,\xi) }{\partial x^{j_l}\partial\xi^{j_1}\cdots \partial \hat{\xi^{j_l}}\cdots \partial\xi^{j_{m-1}}}
	- \sum_{l=1}^{m-1}2\Gamma_{i j_l}^k \xi^i\frac{\partial^{m-1}w (x,\xi) }{\partial \xi^{k}\partial\xi^{j_1}\cdots \partial \hat{\xi^{j_l}}\cdots \partial\xi^{j_{m-1}}} \\
	&\quad  -\Gamma_{ij}^k \xi^i \xi^j \frac{\partial^{m}w (x,\xi) }{\partial \xi^{k}\partial\xi^{j_1}\cdots \partial\xi^{j_{m-1}}}.
\end{align*}
Which implies
$$\left.\frac{\partial^{m-1} Gw (x,\xi)}{\partial\xi^{j_1}\cdots \partial\xi^{j_{m-1}}}\right|_{\xi =e_n}=0, \quad \quad (\text{Using Claim \ref{claim:1} and } \Gamma_{nn}^k =0).$$
Now that we have proved the proposition for the case when $f$ is smooth, it can be extended to the case when $f$ is a distribution by exactly the same reasoning as in \cite [Lemma 3.1]{KS}.
\end{proof}

	\section{Proofs of Theorem \ref{Th:Extension theorem} and Theorem \ref{Th:main Theorem}}
We will start with proving some lemmas and propositions required to prove our main theorems.
\begin{Lemma}\label{Lemma:Wavefront}
	Let $f$ be a symmetric $m$-tensor field as above. Let $\gamma_0$ be a geodesic of $\widetilde{\Omega}$ and $U$ be a neighborhood of $\gamma_{0}$ in $\widetilde{\O}$. Assume that $\text{WF}_{A}(f)\bigcap \pi^{-1}(\text{U })$ does not contain co-vectors of the type $(\xi^{\prime},0)$, then $h=f-\D v$ also does not contain such co-vectors.
\end{Lemma}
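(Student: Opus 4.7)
The strategy is to show that $v$ itself has no analytic wavefront set of type $(\xi', 0)$ in $U$, from which the same conclusion for $h = f - \D v$ follows immediately: $\D$ is a first-order differential operator with analytic coefficients, and such operators cannot enlarge the analytic wavefront set. This generalizes Lemma~4.1 of \cite{KS}, which treats the case $m=2$.

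To control $\text{WF}_A(v)$, I would exploit the explicit inductive construction of $v$ given in Proposition~\ref{prop:Construction of ODE}. The base component $v_{n\cdots n}$ satisfies $\partial_{x^n} v_{n\cdots n} = f_{n\cdots n}$ with zero initial data on $\{x^n = a(x')\}$, and each subsequent component $v_{n\cdots n i_k \cdots i_1}$ is obtained by solving a linear first-order ODE in $x^n$ whose coefficients are built from Christoffel symbols (hence analytic) and whose right-hand side depends linearly on components of $f$, previously constructed components of $v$, and their $\partial_{x^{i_l}}$-derivatives. I would then induct on the number of non-$n$ indices to conclude that every component $v_J$ has no analytic wavefront of type $(\xi', 0)$ in $U$: since multiplication by analytic functions and differentiation cannot enlarge $\text{WF}_A$, the inductive hypothesis guarantees that the right-hand side of each ODE has no such wavefront.

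The inductive step then reduces to a single microlocal fact: if $g \in \Ec'(\wt{U})$ has no analytic wavefront of type $(\xi', 0)$ in $U$, then neither does the solution $u$ of $\partial_{x^n} u - A(x) u = g$ with $u|_{x^n = a(x')} = 0$. Writing
\begin{align*}
u(x', x^n) = \int_{a(x')}^{x^n} K(x', t, x^n)\,g(x', t)\,dt
\end{align*}
with an analytic fundamental matrix $K$, this follows from the observation that integration in $x^n$ for fixed $x'$ commutes with the Fourier transform in $x'$, and therefore preserves the exponential decay in $|\xi'|$ in cones around $(\xi',0)$ that characterizes the absence of $(\xi', 0)$-type analytic wavefront once an appropriate analytic cutoff in $x$ has been applied. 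Since the boundary $\{x^n = a(x')\}$ is analytic, this one-dimensional microlocal calculation goes through without modification.

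The main obstacle is precisely this microlocal bookkeeping for the integration step, but the argument is essentially one-dimensional and has already been carried out in \cite{KS} for the case $m=2$; since the structure of our ODE system is identical (only the number of indices increases), the same argument applies. Once $\text{WF}_A(v)$ is known to exclude co-vectors of type $(\xi',0)$ in $U$, the same holds for $\D v$, and hence for $h = f - \D v$, completing the proof.
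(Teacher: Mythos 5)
Your proposal is correct and follows essentially the same route as the paper: both reduce the claim to $\mathrm{WF}_A(v)$, induct on the number of non-$n$ indices using the ODE system from Proposition \ref{prop:Construction of ODE}, write the solution via Duhamel's principle as an integral in $x^n$ with analytic kernel, and defer the microlocal step to the $m=2$ argument of \cite{KS}. The only cosmetic difference is that where you argue directly that integration in $x^n$ preserves exponential decay in $\xi'$, the paper makes this precise by writing the solution operator's kernel as $\Phi(x',x^n,y^n)H(x^n-y^n)\delta(x'-y')$ and invoking H\"ormander's convolution and non-cancellation theorems, which is the rigorous form of the same observation in the analytic category where compactly supported analytic cutoffs are unavailable.
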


\begin {proof}
Since $v$ and $\D v$ have the same analytic wavefront set, so we will prove the lemma for $v$. We will prove this by induction by proving it for $v_{n\dots n i_{k}\dots i_1}$ for every $k\leq (m-1)$.
Let us first do the analysis for $v_{n\dots n}$. Note that $v_{n\dots n}$ can be rewritten as a convolution with the Heaviside function in the following manner 
\begin{align*}
	v_{n\dots n}(x) &= \int_{-\infty}^{x^n}f_{n\dots n}(x^{'},y^n)dy^n\\
	&= \int_{-\infty}^{\infty}f_{n\dots n}(x^{'},y^n)H(x^n-y^n)dy^n.
\end{align*}
The wavefront set of the convolution can be found by applying \cite[8.2.16]{H1}. Since we have assumed that  $\text{WF}_{A}(f)\bigcap \pi^{-1}(\text{U })$ does not contain co-vectors of the type $(\xi^{\prime},0)$, hence it will be true for $v_{n\dots n}(x)$ as well. 
Now let us assume that the lemma holds for any $ 0 \leq k-1 < (m-1)$ i.e. $v_{n\dots n i_{k-1}\dots i_1}$ satisfies the same wavefront conditions. We will show that this implies that the Lemma \ref{Lemma:Wavefront} is true for $k$. For this consider the system of ODEs from Lemma \ref{Lemma:expansion of dv},

\begin{align*}
	\frac{\partial v_{n\dots ni_k\dots i_1}}{\partial x^n}(x) -2 \sum_{l=1}^k\Gamma_{i_l n}^p v_{n\dots n i_k\dots \hat{i_l}\dots i_1 p}(x) &=	\frac{1}{(m-k)}\left\{ m f_{n\dots ni_k\dots i_1}(x)-\sum_{l=1}^k \frac{\partial v_{n\dots ni_k\dots \hat{i_l}\dots  i_1}}{\partial x^{i_l}}(x)\right.\\
	&\quad +\left. 2 \sum_{l,q=1, l \neq q}^k\Gamma_{i_l i_q}^p v_{n\dots n i_k\dots \hat{i_l}\dots\hat{i_q}\dots i_1 p}(x)\right\},\\
	v_{n\dots ni_k\dots i_1}(x^\prime,a(x^\prime)) &= 0.
\end{align*}
This can be rewritten as : 
\begin{align*}
	\partial_{n}(\tilde{v}) -A(x^{\prime},x^{n})\tilde{v} &=	w,\\
	\tilde{v}|_{x^n<< 0} &= 0
\end{align*}
where $A$ is an analytic matrix, $\tilde{v}=v_{n\dots n i_k \dots i_1}$ and $\text{WF}_{A}(w)\bigcap \pi^{-1}(\text{U})$ does not have covectors of the type $(\xi^{\prime},0)$. By Duhamel's principle the solution to the above is given by:
\begin{center}
	$\tilde{v}(x^\prime, x^n)=\int_{-\infty}^{x^n}\Phi(x^{\prime},x^{n},y^{n})w(x^{\prime},y^{n})dy^{n}$
\end{center}
where $\Phi$ is analytic. The expression given above for $\tilde{v}(x^\prime, x^n)$ can be rewritten as:  
\begin{center}
	$\tilde{v}(x^\prime, x^n)=\int_{R^n}\Phi(x^{\prime},x^{n},y^{n})H(x^{n}-y^{n})\delta(x^{\prime}-y^\prime)w(y^{\prime},y^{n})dy^{\prime}dy^{n}.$
\end{center} 
The kernel of the integral operator is given by : $\Phi(x^{\prime},x^{n},y^{n})H(x^n-y^n)\delta(x^{\prime}-y^{\prime})$. Note that the frequency set of the analytic wavefront set of the Heaviside and delta distributions here are perpendicular to each other and hence satisfy H\"ormander's non cancellation condition \cite[8.5.3]{H1}. The lemma then follows from the argument in \cite{KS}.
\end{proof}

\subsection{Analyticity along Conormal Directions} 
Before moving further, we will need the following proposition which is an analogue of Proposition 2 from \cite{SU3} and generalizes that proposition for the case when $f$ is a symmetric $m$-tensor. We will mimic the proof for the case when $m=2$ as given in that paper and adapt the arguments wherever necessary to make it work for a symmetric tensor field of any order.
\begin{prop}\label{S-U analogue}
Let $\O$ and $f$ be as above. Let $\gamma_{0}$ be a fixed  geodesic through $x_{0}$ normal to $\xi_{0}$ where $(x_{0},\xi_{0})\in T^{*}\O\backslash 0$. Assume $(I^{0}f)(\gamma)=0$ for all $\gamma$ in a neighborhood of $\gamma_{0}$ and $g$ is analytic in this neighborhood. Let $\delta f = 0$ near $x_{0}$.
Then $$(x_{0},\xi_{0}) \notin \text{WF}_{A}(f).$$ 
\end{prop}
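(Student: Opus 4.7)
The scheme will adapt the analytic microlocal argument of Stefanov-Uhlmann \cite[Proposition 2]{SU3}, which handles the $m = 2$ case, to general $m$. The modifications are combinatorial rather than conceptual.

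I will first work in the normal coordinates at $x_0$ introduced before Proposition \ref{prop:Construction of ODE}, chosen so that $\gamma_0$ is the $x^n$-axis and $\xi_0$ takes the form $(\xi_0',0)$ with $\xi_0' \neq 0$. Using Proposition \ref{prop:Construction of ODE}, I build an $(m-1)$-tensor $v$ vanishing on $\{x^n = a(x')\}$ so that $h := f - \D v$ satisfies $h_{i_1\dots i_{m-1}n} = 0$ for every index choice. By Lemma \ref{Lemma:Different definition of v}, $v \equiv 0$ outside $\O$ in the tubular neighborhood $\widetilde{U}$, so the exact part integrates to zero along every geodesic near $\gamma_0$ and $I^0 h(\gamma) = I^0 f(\gamma) = 0$ for all such geodesics. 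The transport construction of $v$ in Proposition \ref{prop:Construction of ODE} amounts to convolutions with Heaviside-type kernels, so $\D v$ carries no analytic wavefront at covectors of the form $(\xi',0)$ above $U$ (this is precisely the content of Lemma \ref{Lemma:Wavefront}); hence it will suffice to prove $(x_0,\xi_0) \notin \mathrm{WF}_A(h)$.

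Next, I parametrize the geodesics near $\gamma_0$ by a transverse base point $z \in \Rb^{n-1}$ and a direction $\theta$ near $e_n$, and test the identity $I^0 h(\gamma_{z,\theta}) = 0$ against a complex analytic FBI cut-off of Sj\"ostrand type
\begin{equation*}
\vp_\lambda(z,\theta) = e^{i\lambda(z\cdot\xi_0' + i|z|^2/2)}\,\chi(z,\theta)\,a(z,\theta;\lambda),
\end{equation*}
where $\chi$ is a real-analytic cut-off near $(0,e_n)$ and $a$ is an analytic polyhomogeneous amplitude. After changing variables from $(t,z,\theta)$ to base coordinates on $\widetilde{U}$, this should rewrite as
\begin{equation*}
\int_{\widetilde{U}} e^{i\lambda\Phi(x,\xi_0')}\,A(x,\xi_0';\lambda)\,h_{i_1\dots i_m}(x)\,\eta^{i_1}(x,\xi_0')\cdots\eta^{i_m}(x,\xi_0')\,dx = 0,
\end{equation*}
with $\Phi$ a complex phase satisfying $d_x\Phi(x_0,\xi_0') = \xi_0$ and having non-degenerate Hessian transverse to $\gamma_0$, and $\eta(x,\xi_0')$ the tangent vector of the geodesic through $x$ with transverse data $\xi_0'$. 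The gauge condition $h_{\cdots n} = 0$ combined with the hypothesis $\delta f = 0$ near $x_0$ should force the surviving components of $h$ to obey a first-order analytic differential system whose principal symbol at $(x_0,\xi_0)$ is invertible on the subspace of symmetric $m$-tensors with vanishing $n$-index components. Analytic stationary phase along the lines of \cite{Sjostrand,SU1,SU3} will then yield $(x_0,\xi_0) \notin \mathrm{WF}_A(h)$, and hence $(x_0,\xi_0) \notin \mathrm{WF}_A(f)$.

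The main obstacle will be this ellipticity check. For $m = 2$ the verification in \cite{SU3} amounts to inverting a $2 \times 2$ matrix, but for general $m$ I will have to show that the only symmetric $m$-tensor $\widehat{h}$ annihilated by both the constraints $\widehat{h}_{i_1\dots i_{m-1}n} = 0$ and the symbolic divergence relation $\xi_0^i \widehat{h}_{i\,j_2\dots j_m} = 0$ (extracted from the principal part of $\delta f = 0$) is the zero tensor. Writing such a $\widehat{h}$ in a basis of symmetric tensors adapted to the $g$-orthogonal splitting $T_{x_0}\O = \Rb\,\partial_n \oplus \xi_0^\perp$ should reduce this to an index-intensive but essentially combinatorial linear-algebra verification, after which the analytic microlocal machinery of \cite{Sjostrand,SU1,SU3} carries through without essential modification.
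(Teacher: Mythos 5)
Your overall framework (semi-geodesic coordinates, FBI/Sj\"ostrand complex stationary phase applied to the vanishing ray transform) matches the paper's, but there is a fatal gap in the step you yourself flag as the main obstacle: the ellipticity check as you state it is false. You claim that a symmetric $m$-tensor $\widehat{h}$ satisfying the gauge conditions $\widehat{h}_{i_1\dots i_{m-1}n}=0$ and the divergence symbol condition $\xi_0^i\widehat{h}_{i j_2\dots j_m}=0$ must vanish. Take $m=2$, $n=3$, $\xi_0=e_2$, and $\widehat{h}=e_1\otimes e_1$: all components with an index equal to $n=3$ vanish and $\xi_0^i\widehat{h}_{ij}=\widehat{h}_{2j}=0$, yet $\widehat{h}\neq 0$. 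In general the two sets of conditions only annihilate components carrying an index equal to $n$ or to the $\xi_0$-direction, leaving the $\binom{n+m-3}{m}$ components with all indices transverse to both untouched (nonzero for every $n\geq 3$). Moreover, a single FBI test along the fixed direction $e_n$ only recovers the one scalar contraction $\langle\widehat{h},e_n^{\otimes m}\rangle$, which is already contained in the gauge conditions, so it contributes nothing new.

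The missing idea — and the heart of the paper's proof — is to repeat the stationary-phase construction for a whole family of $N=\binom{n+m-2}{m}$ unit directions $\theta_1=e_n,\theta_2,\dots,\theta_N$ lying in the hyperplane $\xi_0^{\perp}$ and chosen so that the symmetrized powers $\theta_p^{\odot m}$ are linearly independent; each choice (after a small rotation of axes) yields a phase $\psi_{(p)}$ and an equation with principal symbol $\theta_p^{\otimes m}$. These $N$ equations, together with the $\binom{n+m-2}{m-1}$ equations with symbol $\xi_0^{i_m}$ obtained by integrating $\delta f=0$ by parts against the same phases, form a system of $\binom{n+m-1}{m}$ equations whose joint ellipticity is the nontrivial linear-algebra fact the paper invokes (citing Krishnan--Mishra). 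Note also that the paper runs this argument directly on $f$ using the hypothesis $\delta f=0$; your preliminary reduction to $h=f-\D v$ is harmless at covectors of type $(\xi',0)$ by Lemma \ref{Lemma:Wavefront}, but it does not substitute for the family of directions, and it muddies the divergence condition since $\delta h=-\delta\D v$ need not vanish.
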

\begin{proof} 
 For the given geodesic $\gamma_0$ that passes through $x_0$ and is normal to $\xi_0$, let us consider a tubular neighborhood $U$ of $\gamma_{0}$ endowed with analytic semi-geodesic coordinates $x=(x^{\prime},x^{n})$ on it. Without loss of generality, assume that $x_0=0$. Furthermore, $\forall x \in \gamma_0$, $x^\prime = 0$. Note that $U$ =\{$(x^{\prime},x^{n}):|x^\prime|<\epsilon \text{ and } l^{-}<x_{n}<l^{+}\text{; }0<\epsilon<<1$\} in this co-ordinate system. Choose $\epsilon$ such that $\{x : x_{n}=l^{-},l^{+} \text{ and } |x^\prime|<\epsilon\}$ lies outside $\O$. Clearly  $\xi_{0}=(\xi_{0}^\prime,0)$. Hence our goal is now to show: $$(0,\xi_{0})\notin \text{WF}_{A}(f).$$
As stated earlier, we will reproduce the arguments from \cite{SU3} here for the sake of completeness.
Consider $Z=\{|x|<\frac{7\epsilon}{8}: |x_{n}|=0\}$ and let $x^\prime$ variable be denoted on $Z$ by $z^\prime$. Then $(z^\prime,\theta^\prime)$ are local co-ordinates in nbd$(\gamma_0)$ (in the set of geodesics) given by $(z^\prime,\theta^\prime)\to \gamma_{(z^\prime,0),(\theta^\prime,1)}$. Here, $|\theta^\prime|<<1$ (where, the geodesic is in the direction $(\theta^\prime,1)$). By following their arguments verbatim, we get the following equation: 


\begin{align}\label{eq:3}
	\int e^{i\lambda z^\prime(x,\theta^\prime).\xi^\prime}a_{N}(x,\theta^\prime)f_{i_{1}\dots i_{m}}(x){b^{i_{1}}}(x,\theta^\prime)\cdots {b^{i_m}}(x,\theta^\prime)dx =0.
\end{align}
Here, $(x,\theta^\prime)\to a_{N}$ is analytic and satisfies 
\begin{align}	\label{eq:estimate on cutoff}
|\partial^{\alpha}a_{N}|\leq(CN)^{|\alpha|}, \quad \alpha \leq N,
\end{align} 
\noindent see \cite[Equation(38)]{SU3}. Also, note that $b(0,\theta^\prime)={\theta}$ and $a_{N}(0,\theta^\prime)=1$.

\noindent Further, let us choose $\theta(\xi)$ to be a vector depending analytically on $\xi$ near $\xi=\xi_{0}$ and satisfying the following conditions:
\begin{align*}
	\theta(\xi)\cdot \xi&=0, \quad {\theta}^{n}(\xi)=1\quad \quad \text{ and }\\
	\theta(\xi_{0})&=(0,\cdots,1)=e_{n}
\end{align*}
Now, we will rewrite \eqref{eq:3} using the above mapping in the following form:
\begin{align}\label{eq:4}
	\int e^{i\lambda \phi(x,\xi)}\tilde{a_{N}}(x,\xi)f_{i_{1}\dots i_{m}}(x){\tilde{b}}^{i_{1}}(x,\xi)\cdots {\tilde{b}}^{i_{m}}(x,\xi)dx =0.
\end{align}
\noindent Here $\phi(x,\xi)=z^\prime\cdot\xi^\prime$. This phase function has been shown in \cite{SU3} to be non-degenerate in a neighborhood of $(0,\xi_{0})$ by showing $\phi_{x\xi}(0,\xi)=$ Id. This also implies that $x\to\phi_{\xi}(x,\xi)$ is a diffeomorphism in this neighborhood.

\noindent To establish the above condition in a neighborhood of the geodesic $\gamma_{0}$, one chooses the co-normal vector
\begin{align}\label{eq: definition of theta}
	\xi_{0}=e_{n-1},\quad \text{i.e. the covector}\  (0,0,\cdots,0,1,0)
\end{align}
and defines $$\theta(\xi)=(\xi_1,\cdots,\xi_{n-2},-\frac{\xi_{1}^{2}+\dots+\xi_{n-2}^{2}+\xi_{n}}{\xi_{n-1}},1).$$ This definition of $\theta $ is consistent with the requirement put on $\theta(\xi)$ as above. 
One can then show that the differential of the map $\xi \to \theta(\xi)$ where $\xi \in S^{n-1}$ is invertible at $\xi_{0}=e_{n-1}$, see \cite[Equation (44)]{SU3}.
\begin{Lemma} \cite[Lemma 3.2]{SU3}\label{Lemma:1}
	Let, ${\theta(\xi)}$ and $\phi(x,\xi)$ be as above. Then, $\exists$ $ \delta>0$ such that if $$\phi_{\xi}(x,\xi)=\phi_{\xi}(y,\xi)$$ for some $x \in U$, $|y|<\delta$, $|\xi-\xi_0|<\delta$ where $\xi$ is complex, then $y=x$.
\end{Lemma}
\noindent We will study the analytic wavefront set of $f$ using Sj\"ostrand's complex stationary phase method. For this assume $x$, $y$ as in Lemma \ref{Lemma:1} and $|\xi_0-\eta|<\frac{\delta}{\tilde{C}}$ with $\tilde{C}>>2$ and $\delta<<1$. Multiply \eqref{eq:4} by
$$\tilde{\chi}(\xi-\eta)e^{i\lambda\left(i\frac{(\xi-\eta)^2}{2}-\phi(y,\xi)\right)}$$
where $\tilde{\chi}$ is the characteristic function of the ball $B(0,\delta)\subset \mathbb{C}^n$ and then integrate w.r.t. $\xi$ to get: 
\begin{align}\label{eq:7}
	\iint e^{i\lambda \Phi(y,x,\xi,\eta)}\tilde{\tilde{a_{N}}}(x,\xi)f_{i_{1}....i_{m}}(z){\tilde{b}}^{i_{1}}(x,\xi)\cdots{\tilde{b}}^{i_{m}}(x,\xi)dxd\xi =0.
\end{align}
In the above equation, $\tilde{\tilde{a_{N}}}=\tilde{\chi}(\xi-\eta)\tilde{a}_{N}$ is another analytic and elliptic amplitude for $x$ close to zero and $|\xi-\eta|<\frac{\delta}{\tilde{C}}$ and $$\Phi=-\phi(y,\xi)+\phi(x,\xi)+\frac{i}{2}(\xi-\eta)^{2}.$$ Furthermore, $$\Phi_{\xi}=\phi_{\xi}(x,\xi)-\phi_{\xi}(y,\xi)+i(\xi-\eta).$$
To apply the stationary phase method we need to know the critical points of $\xi \mapsto \Phi$. Using the Lemma \ref{Lemma:1} above we have:
\begin{enumerate}
	\item[(i)] If $y=x$, $\exists$ a unique real critical point $\xi_{c}=\eta$
	\item[(ii)] If $y \neq x$, there are no real critical points 
	\item[(iii)] Also by Lemma \ref{Lemma:1}, if $y \neq x$, there is a unique complex critical point if $|x-y|<\delta/C_{1}$ and no critical points for $|x-y|>\delta/C_0$ for some constants $C_0 \text{ and }C_1 $ with $C_1 > C_0$.		
\end{enumerate}
Define, $\psi(x,y,\eta):=\Phi(\xi_c)$. Then at $x=y$
\begin{center}
	(i) $\psi_{y}(x,x,\eta) = -\phi_{x}(x,\eta)$ \quad
	(ii) $\psi_{x}(x,x,\eta) = \phi_{x}(x,\eta)$ \quad
	(iii) $\psi(x,x,\eta) =0 $. \quad
\end{center}
Now, we split the $x$ integral in \eqref{eq:7} in to two parts : we integrate over $\{x:|x-y|>\delta/C_{0}\}$ for some $C_{0}>1$ and its complement. Since, $|\Phi_{\xi}|$ has a positive lower bound for $\{x:|x-y|>\delta/C_{0}\}$ and there are no critical points of $\xi \to  \Phi$ in this set, we can estimate that integral in the following manner: First note that, $e^{i\lambda \Phi(x,\xi)} = \frac{\Phi_{\xi}\partial_{\xi}}{i\lambda|\Phi_\xi|^{2}}e^{i\lambda \Phi(x,\xi)}$. Using, \eqref{eq:estimate on cutoff} and integrating by parts $N$ times with respect to $\xi$ and the fact that on the boundary $|\xi-\eta|=\delta$, we get
\begin{align}\label{eq:8}
	\left|	\iint_{|x-y|>\delta/C_{0}} e^{i\lambda \Phi(y,x,\xi,\eta)}\tilde{\tilde{a_{N}}}(x,\xi)f_{i_{1}\dots i_{m}}(x){\tilde{b}}^{i_{1}}(x,\xi)\cdots{\tilde{b}}^{i_{m}}(x,\xi)dxd\xi \right|\leq C\bigg(\frac{CN}{\lambda}\bigg)^{N}+CNe^{-\frac{\lambda}{C}}.
\end{align}

\noindent We choose $N\leq\lambda/Ce\leq N+1$ to get an exponential error on the right. Now in estimating the integral
\begin{align}\label{eq:9}
	\left|	\int_{|x-y|\leq\delta/C_{0}} e^{i\lambda \Phi(y,x,\xi,\eta)}\tilde{\tilde{a_{N}}}(x,\xi)f_{i_{1}\dots i_{m}}(x){\tilde{b}}^{i_{1}}(x,\xi)\cdots{\tilde{b}}^{i_{m}}(x,\xi)dxd\xi \right|,
\end{align}
we use \cite[Theorem 2.8]{Sjostrand} and the \cite[Remark 2.10]{Sjostrand} following that to conclude: 


\begin{align}\label{eq:10}
	\int_{|x-y|\leq\delta/C_{0}}e^{i\lambda \psi(x,\alpha)}f_{i_{1}\dots i_{m}}(x)B^{i_{1}\dots i_{m}}(x,\alpha;\lambda)dx=\mathcal{O}(e^{-\lambda/C})
\end{align}
where  $\alpha=(y,\eta)$ and $B$ is a classical analytical symbol with principal part $\tilde{b}\otimes\cdots \otimes\tilde{b}$. See appendix below for a proof of estimates in (\ref{eq:8}) and (\ref{eq:10}).
\par \noindent Let, $\beta=(y,\mu)$ where, $\mu=\phi_{y}(y,\eta)=\eta+\mathcal{O}(\delta)$. At $y=0$, we have $\mu=\eta$. Also $\alpha \to \beta$ is a diffeomorphism following similar analysis as in \cite[Section 4]{SU3}. If we write $\alpha=\alpha(\beta)$, then the above equation becomes: 
\begin{align}\label{eq:11}
	\int_{|x-y|\leq\delta/C_{0}}e^{i\lambda \psi(x,\beta)}f_{i_{1}\dots i_{m}}(x)B^{i_{1}\dots i_{m}}(x,\beta;\lambda)dx=\mathcal{O}(e^{-\lambda/C})
\end{align}
where $\psi$ satisfies (i), (ii) and (iii), and $B$ is a classical analytical symbol as before and
\begin{center}
	$\psi_{y}(x,x,\eta) = -\mu$,\quad 
	$\psi_{x}(x,x,\eta) = \mu$ \quad and \quad 
	$\psi_{y}(x,x,\eta) =0.$
\end{center}
The symbols in \eqref{eq:11} satisfy : $$\sigma_{P}(B)(0,0,\mu)= {\theta}(\mu)\otimes\cdots \otimes{\theta}(\mu) = \theta^{\otimes m}(\mu)$$
and in particular,  $$\sigma_{P}(B)(0,0,\xi_0)= e_n\otimes \cdots  \otimes e_n.$$ 
\par \noindent Let, ${\theta}_1=e_{n}$, ${\theta}_2,\dots,{\theta}_N$ be $N=\binom{n+m-2}{m}$ unit vectors at $x_0=0$ lie in the hyperplane perpendicular to $\xi_0$. We will also assume that $\{\theta_i^{\odot m}\}_{i=1}^N$ are independent, where $\odot$ is a symmetrized  product of vectors.
Existence of such vectors in any open set in $\xi^{ \perp}_0$ can be shown. We can therefore assume that $\theta_{p}$ belongs in a small neighborhood around $\theta_{1}=e^{n}$.  Then we can rotate the axes a little such that $\xi_0=e^{n-1}$ and $\theta_{p}=e_{n}$ and do the same construction as above. This gives us $N=\binom{n+m-2}{m}$ phase functions $\psi_{(p)}$, and as many number of analytic symbols for which \eqref{eq:11} is true i.e.
\begin{align}\label{eq:12}
	\int_{|x-y|\leq\delta/C_{0}}e^{i\lambda \psi_{(p)}(x,\beta)}f_{i_{1}\dots i_{m}}(x)B_{(p)}^{i_{1}\dots i_{m}}(x,\beta;\lambda)dx=\mathcal{O}(e^{-\lambda/C})
\end{align}
where  $$\sigma_{P}(B_p)(0,0,\mu)= {\theta_p}(\mu)\otimes\cdots\otimes{\theta_p}(\mu),\quad p=1,\dots, N \quad \text{ up to  elliptic factors}.$$
Now we use the fact that $\delta f = 0 $ near $x_{0}$. So integrating 
$$\frac{1}{\lambda}\exp(i\lambda \psi_{(1)}(x,\beta)\chi_{0}\delta f=0$$ w.r.t. $x$ and after an integration by parts, we get
\begin{equation}\label{eq:13}
	\int_{|x-y|\leq\delta/C_{0}}e^{i\lambda \psi_{(1)}(x,\beta)}f_{i_{1}...i_{m}}(x)C^{i_{m}}(x,\beta;\lambda)dx=\mathcal{O}(e^{-\lambda/C})\text{, } i_{j}\in\{1,\dots, n\} \text{ and } j = 1, \dots , (m-1)
\end{equation} 
for $\beta_x=y$ small enough and where $\sigma_P(C^{i_m})(0,0,\xi_0)=(\xi_0)^{i_m}$.
This gives us additional $\tilde{N}=\binom{n+m-2}{m-1}$ equations such that the system of $N+\tilde{N}=\binom{n+m-1}{m}$ equations \eqref{eq:12}, \eqref{eq:13} can be viewed as a tensor valued operator on $f$. We claim that the symbol for this operator  is elliptic at $(0,0,\xi_0)$.
Indeed, to show that the symbol is elliptic at $(0,0,\xi_0)$ amounts to showing that the only solution to following system of equations is $f=0$:
\begin{align}
	\theta_{p}^{i_1}\dots \theta_{p}^{i_m} f_{i_1\dots i_m}&=0, \quad \text{ for all } p=\{1, \dots, N \}\\
	\xi_{0}^{i_m} f_{i_1\dots i_m}&=0, \quad  \text{ for  } 1\leq  i_1\leq\dots \leq i_{m-1}
	\leq n.	  \end{align}
Using conditions on $\theta_p$ and $\xi_0$, it is proved in \cite{Krishnan2016} that above system of equations will imply $f=0$.
\end{proof}
\noindent For the more general case, when $\delta f$ is microlocally analytic at $(x_{0},\xi_{0})$, we use the same arguments as above, except that we multiply \eqref{eq:11} by an appropriate cut-off near $(x_{0},x_{0},\xi_{0})$ and use integration by parts as explained in \cite[Section 4]{KS} to conclude the following proposition:
\begin{prop}\label{Prop:Venky's Analogue}
	Let $\widetilde{\O}$, $f$ and  $\gamma_{0}$ be as in the statement of Proposition \ref*{S-U analogue}. If $(x_{0},\xi_{0})\notin WF_{A}(\delta f)$ (where $\xi_{0}$ is normal to the geodesic $\gamma_{0}$ at $x_0$), and  $I^{0}f(\gamma)=0$ for all $\gamma$ in a nbd. of $\gamma_{0}$, then $(x_{0},\xi_{0}) \notin WF_A(f)$.
\end{prop}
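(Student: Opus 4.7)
The plan is to follow the proof of Proposition \ref{S-U analogue} essentially verbatim, adapting only the step where the hypothesis $\delta f = 0$ near $x_0$ was used. Everything up through equation \eqref{eq:12} goes through unchanged: for each of the $N=\binom{n+m-2}{m}$ vectors $\theta_p$ in the hyperplane orthogonal to $\xi_0$ (chosen so that the symmetrized tensor powers $\{\theta_p^{\odot m}\}$ are linearly independent), we rotate coordinates, run the Sjöstrand complex stationary phase argument on the vanishing integral moment, and obtain $N$ exponentially small relations of the form \eqref{eq:12} involving analytic symbols with principal parts $\theta_p^{\otimes m}$.

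The only point where the original proof used the strong pointwise hypothesis $\delta f = 0$ near $x_0$ was in deriving the additional $\tilde{N}=\binom{n+m-2}{m-1}$ equations \eqref{eq:13}. There, the identity $\tfrac{1}{\lambda}\exp(i\lambda\psi_{(1)})\chi_0\,\delta f = 0$ was integrated and then integrated by parts to transfer the divergence onto the phase/symbol and produce equations of the form \eqref{eq:13} with principal symbol $\xi_0^{i_m}$. Under the weakened hypothesis, $\delta f$ is no longer literally zero near $x_0$, so this step must be replaced by a microlocal estimate. Concretely, I would multiply the integrand by a compactly supported analytic (or Gevrey) cut-off $\chi(x,y,\xi)$ concentrated near $(x_0,x_0,\xi_0)$ in phase space, exactly as described in \cite[Section 4]{KS}, and then integrate by parts in $x$ to move the divergence onto the test function. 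This produces a main term of the same form as \eqref{eq:13}, plus an error given by the pairing of $\delta f$ against an analytic symbol times $e^{i\lambda\psi_{(1)}}$ cut off near $(x_0,\xi_0)$.

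The key point — and the main technical obstacle — is to show that this error term is $\mathcal{O}(e^{-\lambda/C})$. This is exactly the content of the FBI/Sjöstrand characterization of the analytic wavefront set: because $(x_0,\xi_0)\notin\text{WF}_A(\delta f)$, the pairing
\[
\int e^{i\lambda\psi_{(1)}(x,\beta)}\,\chi(x,\beta)\,(\delta f)(x)\,dx
\]
decays exponentially in $\lambda$ uniformly for $\beta=(y,\mu)$ in a small neighborhood of $(x_0,\xi_0)$, once the phase $\psi_{(1)}$ has the appropriate properties at the critical point (which it does, by the computations immediately preceding \eqref{eq:11}). This requires checking that $\psi_{(1)}(x,\beta)$ is an admissible phase for the FBI transform at $(x_0,\xi_0)$, which amounts to verifying that $\psi_{(1),x}(x_0,x_0,\xi_0)=\xi_0$ and that the imaginary Hessian is positive definite on the appropriate subspace — both of which follow from conditions (i), (ii), (iii) on $\psi$. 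The technique is standard in the analytic microlocal literature (see \cite{Sjostrand,KS}), so once the cut-off is chosen compatibly with the neighborhood of microlocal analyticity of $\delta f$, the estimate follows.

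With \eqref{eq:13} thus recovered modulo exponentially small errors, the remainder of the argument is identical to that of Proposition \ref{S-U analogue}: the combined system of $N+\tilde{N}=\binom{n+m-1}{m}$ equations can be viewed as an analytic pseudodifferential operator applied to $f$, whose principal symbol at $(0,0,\xi_0)$ is built from the tensors $\theta_p^{\otimes m}$ together with the symmetric tensors containing $\xi_0$ as a factor. Ellipticity of this symbol at $(x_0,\xi_0)$ is precisely the algebraic statement proved in \cite{Krishnan2016}, and concluding $(x_0,\xi_0)\notin \text{WF}_A(f)$ then follows by Sjöstrand's characterization of the analytic wavefront set.
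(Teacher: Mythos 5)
Your proposal follows exactly the route the paper takes: the paper's own proof of this proposition is a one-line remark that one repeats the argument of Proposition \ref{S-U analogue}, multiplying by a cut-off near $(x_0,x_0,\xi_0)$ and integrating by parts as in \cite[Section 4]{KS}, with the hypothesis $(x_0,\xi_0)\notin\text{WF}_A(\delta f)$ supplying the exponential decay of the resulting error term. You have simply spelled out the details that the paper delegates to the citation, so the two arguments coincide.
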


\noindent The rest of the argument from \cite{KS} applies as it is and thereby we prove Theorem \ref{Th:Extension theorem}. We will briefly outline the ideas here for the sake of completeness:
We will first need to show that the following analogue of \cite[Theorem 2.2(a)]{KS} holds for the case of symmetric $m$ tensor fields as well:
\begin{theorem}\label{KS_Th1}
Let $f$ be as above. Then $I^{0}f(\gamma) = 0$ for each geodesic $\gamma$ in $\mathcal{A}$, if and only if for each geodesic $\gamma_{0} \in \mathcal{A}$ there exists a a neighborhood $\mathcal{U}$ of $\gamma_0$ and a $(m-1)$-tensor field $v\in \mathcal{D}^\prime(\widetilde{\Omega_{\mathcal{U}}})$  such that $f = \D v$ in $\widetilde{\Omega_{\mathcal{U}}}$, and $v = 0$ outside $\Omega$.	
\end{theorem}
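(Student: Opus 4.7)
The ``if'' direction is straightforward integration by parts: if $f = \D v$ on $\widetilde{\Omega_{\mathcal{U}}}$ and $v$ vanishes outside $\Omega$, then for $\gamma \in \mathcal{A}\cap\mathcal{U}$ the integrand equals $\tfrac{d}{dt}\bigl(v_{i_1\dots i_{m-1}}(\gamma(t))\,\dot\gamma^{i_1}\cdots\dot\gamma^{i_{m-1}}\bigr)$, and the integral vanishes because the endpoints of $\gamma$ lie in $\widetilde{\O}\setminus\O$ where $v = 0$.

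For the forward direction, fix $\gamma_0 \in \mathcal{A}$ and introduce a tubular neighborhood $\widetilde U \subset \widetilde{\O}$ of $\gamma_0$ with the semi-geodesic coordinates of Section \ref{Section: Preliminaries}. First I would use Proposition \ref{prop:Construction of ODE} to construct a distributional $(m-1)$-tensor field $v$ on $\widetilde U$ with $v(x',a(x'))=0$ so that $h := f - \D v$ satisfies $h_{i_1\dots i_{m-1}n}=0$ for all indices. Lemma \ref{Lemma:Different definition of v} then forces $v \equiv 0$ on $\mathrm{int}(\widetilde U)\setminus \O$, so $h$ is supported in $\O\cap\widetilde U$ and a further integration by parts yields $I^0 h(\gamma) = I^0 f(\gamma) = 0$ for every geodesic $\gamma$ close to $\gamma_0$.

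To finish, I would show $h\equiv 0$ by establishing $\text{WF}_A(h)\cap \pi^{-1}(\widetilde U) = \emptyset$ and then invoking analytic continuation from the nonempty open set $\mathrm{int}(\widetilde U)\setminus\O$, where $h$ vanishes, through the connected set $\widetilde U$. Lemma \ref{Lemma:Wavefront} directly excludes covectors of the form $(\xi',0)$ from the analytic wavefront set of $h$. For any remaining covector $(x_0,\xi_0)$, the plan is to pick a second geodesic through $x_0$ in $\mathcal{A}$ perpendicular to $\xi_0$, redo the ODE construction of Proposition \ref{prop:Construction of ODE} in fresh semi-geodesic coordinates adapted to that geodesic, and invoke Proposition \ref{Prop:Venky's Analogue} on the corresponding residual $\tilde h$ to obtain $(x_0,\xi_0)\notin\text{WF}_A(\tilde h)$; since two such residuals differ by a potential tensor $\D(\tilde v - v)$, the microlocal information transfers to $h$ after a solenoidal decomposition.

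The principal technical obstacle is verifying the hypothesis of Proposition \ref{Prop:Venky's Analogue}, namely that the divergence of the residual $\tilde h$ is microlocally analytic at the relevant covectors. The tangential vanishing $\tilde h_{i_1\dots i_{m-1}n}=0$ combined with the identities $\Gamma^i_{nn}=\Gamma^n_{in}=0$ in semi-geodesic coordinates lets one express $\delta \tilde h$ purely in terms of tangential derivatives of the tangential components of $\tilde h$, which is controlled by Lemma \ref{Lemma:Wavefront}. Carrying out this calculation for general $m$ requires tracking the symmetrization operator appearing in $\D$ carefully, and constitutes the combinatorial heart of the extension of \cite[Section 4]{KS} from the $m=2$ case.
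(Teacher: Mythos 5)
Your ``if'' direction and the first half of the forward direction (Proposition \ref{prop:Construction of ODE} to build $v$ in a tubular neighborhood, Lemma \ref{Lemma:Different definition of v} to force $v=0$ outside $\O$) match the paper. The gap is in your finishing step. You propose to prove $\text{WF}_A(h)\cap\pi^{-1}(\widetilde U)=\emptyset$ and then conclude by analytic continuation; this cannot work with the available data. First, Lemma \ref{Lemma:Wavefront} is conditional: it only transfers the absence of covectors $(\xi',0)$ from $f$ to $h$, so it does not ``directly exclude'' anything --- the absence for $f$ must itself come from Proposition \ref{Prop:Venky's Analogue}, which requires a geodesic of $\mathcal{A}$ through $x_0$ normal to $\xi_0$ on a neighborhood of which $I^0f$ vanishes. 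Second, and more seriously, for covectors not conormal to any geodesic of $\mathcal{A}$ --- for instance $(x_0,dx^n)$ with $x_0$ on $\gamma_0$, whose normal geodesics are transversal to $\gamma_0$ and in general miss $\mathcal{A}$ entirely (think of $\mathcal{A}$ as the geodesics avoiding a convex set $K$ with $x_0$ near $K$) --- the hypothesis gives no microlocal information at all. So the full analytic wavefront set of $h$ cannot be shown to be empty, and global analytic continuation is unavailable. Note also that your plan never uses the homotopy hypothesis on $\mathcal{A}$, which is essential to the statement.

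The paper's proof is built around exactly this obstruction. It first establishes $f=\D v_0$ with $v_0$ analytic in a collar of $\partial\Omega$ via Morrey's theorem (Lemma \ref{h in a boundary neighnorhood}), then deforms $\gamma_0$ within $\mathcal{A}$ to a boundary-tangent geodesic and sweeps a family of cones $C_\epsilon(t)$ along the homotopy, defining $t^*$ as the supremum of times at which $h_t$ vanishes on the cone. At the first point $q$ where $\partial C_{\epsilon_0}(t^*)$ meets $\text{supp}(h_{t^*})$, one needs microlocal analyticity only in the conormal directions to the cone surface at $q$; these are conormal to a geodesic $\tilde\gamma\in\mathcal{A}$ lying on that surface, so Proposition \ref{Prop:Venky's Analogue} together with Lemma \ref{Lemma:Wavefront} does apply (after reducing to $f=f^s$ so that $\text{WF}_A(\delta f)\subset N^*(\partial\Omega)$, which the cone construction keeps disjoint from $N^*\tilde\gamma$ --- this replaces your proposed computation of $\delta\tilde h$). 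The Sato--Kawai--Kashiwara theorem (Lemma \ref{S-K-K}) then pushes $q$ out of the support, forcing $t^*=1$. You would need to replace your global-analyticity step with this local, conormal-only unique continuation argument propagated along the homotopy.
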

\noindent The ``if" part follows from the Fundamental Theorem of Calculus. To prove the ``only if" part of the theorem assume that $\gamma_{0}$ is a geodesic in the set $\mathcal{A}$, where $\mathcal{A}$ is defined in  Section \ref{Section: Preliminaries}. This means that it can be continuously deformed within the set to a point. Hence by extending all geodesics in $ \Omega$ to maximal geodesics in $\tilde {\Omega}$, we know that there must exist two continuous curves $a(t)\text{, }b(t)$, $t\in [0,1]$ such that $\gamma_{(a(0),b(0))}$ is tangent to $\partial \Omega$,   $\gamma_{(a(t),b(t))} \in \mathcal{A}$ and $\gamma_{(a(1),b(1))}$ is $\gamma_{0}$. Using \cite[Theorem A]{Morrey1957}, one can show that the Theorem \ref{KS_Th1} is at least true in a small neighborhood of $\partial \Omega$ i.e. in some neighborhood of the geodesics $\gamma_{(a(t),b(t))}$ for $0 \leq t \leq2 t_0$ for some $t_0<<1$. More precisely, 
\begin{Lemma}\cite [Lemma 5.1]{KS} \label{h in a boundary neighnorhood}
There exists a neighborhood $V$  of $\partial \Omega$ such that $\forall x \in V$, $dist(x,\partial \Omega)<\epsilon_0$ for some $\epsilon_{0} > 0$ and a unique $v_0$ such that $f=\D v_{0}$ in $V$, $v_0 = 0$ on $\partial \Omega$ and $v_0$ is analytic in $V$, up to the boundary $\partial \Omega$.
\end{Lemma}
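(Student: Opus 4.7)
The strategy is to build $v_0$ locally along tubular neighborhoods of geodesics tangent to $\partial\O$, enforce the boundary condition from the outside via Lemma \ref{Lemma:Different definition of v}, patch by uniqueness of the underlying ODE, and obtain analyticity up to $\partial\O$ by an analytic elliptic regularity argument.

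Fix $x_0 \in \partial\O$. By strict convexity of $\partial\O$ there is a maximal geodesic $\gamma_0$ of $\widetilde\O$ that is tangent to $\partial\O$ near $x_0$; this $\gamma_0$ is homotopic within $\mathcal{A}$ to a geodesic lying entirely outside $\O$, and hence belongs to $\mathcal{A}$ and never meets $K$. In the semi-geodesic coordinates $(x',x^n)$ of Section \ref{Section: Preliminaries} on a tubular neighborhood $U$ of $\gamma_0$, apply Proposition \ref{prop:Construction of ODE} to produce an $(m-1)$-tensor field $v$ on $U$ with $v(x',a(x'))=0$ and $h:=f-\D v$ satisfying $h_{i_1\dots i_{m-1}n}=0$. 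Because $I^0 f(\gamma)=0$ for every maximal geodesic in $\widetilde U$ sufficiently close to $\gamma_0$, Lemma \ref{Lemma:Different definition of v} yields $v \equiv 0$ on $\mathrm{int}(\widetilde U)\setminus\O$, and in particular $v|_{\partial\O\cap U}=0$.

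Covering a collar $V=\{x\in\widetilde\O:\text{dist}(x,\partial\O)<\epsilon_0\}$ of $\partial\O$ by finitely many such tubes $U_j$, the local fields $v^{(j)}$ are glued into a single $v_0$ on $V$ satisfying $v_0|_{\partial\O}=0$ and $f=\D v_0$ in $V$: any two candidates differ by a $w$ with $\D w=0$ and $w|_{\partial\O}=0$, and the first-order linear ODE system of Proposition \ref{prop:Construction of ODE} applied to $w$ (with zero forcing and zero Cauchy data on the outer face $\{x^n=a(x')\}\subset\widetilde\O\setminus\O$) forces $w\equiv 0$, which establishes uniqueness and consistency on overlaps.

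For analyticity up to $\partial\O$, observe that the outer piece $V\setminus\overline\O$ carries zero forcing, so $v_0\equiv 0$ is trivially analytic there. To propagate analyticity across $\partial\O$ into $V\cap\O$, recast $\D v_0=f$ with $v_0|_{\partial\O}=0$ as an analytic elliptic boundary value problem by passing to the solenoidal gauge, so that the compound operator $\delta \D$ becomes elliptic with real-analytic coefficients, and invoke Morrey's theorem \cite{Morrey1957} on the analyticity of solutions of analytic elliptic boundary value problems. The main obstacle will be verifying that the resulting boundary value problem for $v_0$ fits the hypotheses of Morrey's theorem, namely ellipticity of $\delta \D$ together with the Lopatinskii condition at $\partial\O$; both reductions, in the semi-geodesic coordinates, amount to elementary symbol computations using the structure $g_{ni}=\delta_{ni}$ and $\Gamma^i_{nn}=\Gamma^n_{in}=0$ to decouple the normal and tangential directions.
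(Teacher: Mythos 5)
There is a genuine gap at the central step. Proposition \ref{prop:Construction of ODE} only produces a $v$ for which the components of $h=f-\D v$ carrying at least one index equal to $n$ vanish; the purely tangential components $h_{i_1\dots i_m}$ with all $i_j<n$ are untouched by that construction. Your argument passes from this to ``$f=\D v_0$ in $V$'' (in the gluing paragraph) without ever showing $h=0$, and that implication is precisely the content of the lemma. The hypothesis $I^0f=0$ on geodesics near the tangent ones enters your proof only through Lemma \ref{Lemma:Different definition of v}, which gives $v=0$ outside $\O$; this controls the support and the normal components of $h$ but says nothing about the tangential ones. In \cite[Lemma 5.1]{KS}, which the paper is importing here, the missing input is supplied in the opposite order: one first shows that the solenoidal potential, i.e.\ the solution of the elliptic boundary value problem $\delta\D v=\delta f$ in $\O$, $v|_{\partial\O}=0$, is analytic up to $\partial\O$ by Morrey's Theorem A (this is where ellipticity of $\delta\D$ and the Lopatinskii condition for the Dirichlet data are actually verified), hence $f^s=f-\D v$ is analytic up to the boundary from inside; one then uses $I^0f(\gamma)=0$ for geodesics close to tangency together with strict convexity of $\partial\O$ to show that $f^s$ vanishes to infinite order on $\partial\O$, and analyticity forces $f^s\equiv0$ in a collar, which is what yields $f=\D v$ there with $v$ analytic and vanishing on $\partial\O$.

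In short, your invocation of Morrey comes too late and for the wrong object: you use it only to upgrade the regularity of a $v_0$ that you have not yet shown to satisfy $f=\D v_0$, and you explicitly leave the ellipticity and Lopatinskii verifications as an acknowledged obstacle, whereas they are the engine of the proof rather than a final polish. The covering/gluing discussion and the uniqueness of potentials with vanishing Cauchy data are fine but secondary; without the step forcing the tangential part of $h$ (equivalently $f^s$) to vanish near $\partial\O$, the lemma is not proved.
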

\noindent Note that the above implies that in $V$, the tensor $h=f-\D v$ as constructed in Proposition \ref{prop:Construction of ODE} is zero. We will now construct a sequence of neighborhoods beginning with a neighborhood of $\gamma_{(a(0),b(0))}$ and up to a neighborhood of $\gamma_{(a(1),b(1))}$ for which the locally defined tensor field $h=f-\D v$ is zero. However to implement this program we will need the following theorem due to Sato-Kawai-Kashiwara, see e.g. \cite{Sato1973} or \cite{Zhou2000}:
\begin{Lemma}\cite[Lemma 3.1]{Zhou2000}\label{S-K-K}
Let $f \in \mathcal{D}^{\prime}(\Omega)$. Let $x_{0}\in \Omega$ and let $U$ be a neighborhood of $x_0$. Assume that $S$ is a $C^2$ submanifold of $\Omega$ and $x_0 \in supp(f) \cap S$. Furthermore, let $S$ divide $U$ into two open connected sets and assume that $f=0$ on one of these open sets. Let $\xi \in N_{x_0}^{*}(S)\setminus 0$, then $(x_0,\xi) \in WF_{A}(f)$.
\end{Lemma}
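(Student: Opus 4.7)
The plan is to argue by contradiction, combining the FBI (Bros--Iagolnitzer) characterization of the analytic wavefront set with a Holmgren-type unique continuation. The statement is a distributional shadow of Kashiwara's watermelon theorem: microlocal analyticity in a conormal direction, combined with one-sided vanishing of $f$, propagates to full real-analyticity at the base point.

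Choose $C^2$ coordinates $(x', x^n)$ near $x_0$ in which $x_0 = 0$, $S \cap U = \{x^n = 0\}$, and (after possibly relabeling the two sides) $f \equiv 0$ on $U^+ := U \cap \{x^n > 0\}$. Since $S$ is a hypersurface, $N_0^*(S) \setminus 0$ consists of nonzero multiples of $\pm dx^n$; by symmetry it suffices to treat $\xi_0 = dx^n$. Suppose for contradiction that $(0, dx^n) \notin WF_A(f)$. By the Bros--Iagolnitzer theorem there exist a cutoff $\chi \in C^\infty_c(U)$ with $\chi \equiv 1$ near $0$, a neighborhood $V$ of $0$, a conic neighborhood $\Gamma$ of $dx^n$, and constants $c, C > 0$ with
\[ \left| \int e^{i\lambda (x - y) \cdot \eta - \lambda |x - y|^2 / 2}\, \chi(y)\, f(y)\, dy \right| \le C e^{-c\lambda}, \qquad x \in V,\ \eta \in \Gamma,\ \lambda \ge 1. \]

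Now exploit the one-sided vanishing. Because $\chi f$ is supported in the closed half-space $\{y^n \le 0\}$, the Gaussian FBI kernel is a holomorphic function of $y^n$ in a strip and the $y^n$-contour may be deformed into the lower half plane without picking up any contribution from $\chi f$. A standard edge-of-the-wedge maneuver then converts the above decay, valid for $\eta$ near $+dx^n$, into an analogous exponential decay of the FBI transform of $f$ with $\eta$ ranging over a full conic neighborhood of $T^*_0 \Omega \setminus 0$. Equivalently, $f$ is the boundary value of a function holomorphic in a full wedge at $0$, and hence is real-analytic on some neighborhood of $0$. Since $f \equiv 0$ on the open set $U^+$, the identity principle forces $f \equiv 0$ near $0$, contradicting $0 \in \mathrm{supp}(f)$.

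The chief obstacle is the contour-deformation/edge-of-the-wedge step: translating decay of the FBI transform in a single conormal direction, together with the support condition, into a bona fide microlocal analyticity statement at $0$ in every direction. This requires careful bookkeeping of the holomorphic extension of the Gaussian phase in $y^n$ and of the boundary contributions on the deformed contour; it is essentially the Sato--Kawai--Kashiwara watermelon argument transcribed into the analytic-microlocal (FBI) language used in the rest of the paper.
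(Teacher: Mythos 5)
First, a point of comparison: the paper itself does not prove this lemma --- it is quoted verbatim from Zhou (Lemma 3.1) and is the classical Sato--Kawai--Kashiwara theorem (see also H\"ormander, Theorem 8.5.6${}'$ in \emph{The Analysis of Linear Partial Differential Operators I}), so there is no in-paper argument to measure you against. Judged on its own terms, your sketch correctly identifies the classical strategy --- the FBI characterization of $WF_{A}$ plus one-sided vanishing forcing full analyticity at $x_0$, then the identity principle against $x_0\in \mathrm{supp}(f)$ --- but it contains two genuine gaps.

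First, you flatten $S$ by a $C^2$ change of coordinates and then run analytic-microlocal analysis in the new coordinates. The analytic wavefront set is invariant only under real-analytic changes of variables; a $C^2$ flattening destroys the object you are tracking, and this is exactly why $C^2$ regularity of $S$ is the delicate hypothesis here. The standard proofs use only linear (hence analytic) coordinates adapted to $T_{x_0}S$, so that $\mathrm{supp}(\chi f)\subset\{y^n\le \phi(y')\}$ with $\phi(y')=O(|y'|^2)$, and then exploit the competition between the quadratic bulge of the support and the Gaussian weight $e^{-\lambda|x-y|^2/2}$ (equivalently, they touch $S$ from the vanishing side by analytic paraboloids). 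Second, and more seriously, the pivotal step --- that decay of the FBI transform for $\eta$ near $+dx^n$, together with the support condition, upgrades to decay for $\eta$ in a full conic neighborhood of $T^*_{0}\Omega\setminus 0$ --- \emph{is} the content of the theorem. The contour-deformation picture you invoke does not literally parse ($\chi f$ is a distribution in $y$, not a holomorphic integrand whose contour can be moved), and the genuine mechanism is a Phragm\'en--Lindel\"of/subharmonicity argument in the complexified FBI variables (H\"ormander's proof of Theorem 8.5.6${}'$, or Sj\"ostrand's watermelon argument); that mechanism is also what explains why both conormal directions $\pm\xi_0$ end up in $WF_A(f)$ even though the support condition is one-sided, whereas your ``deform into the side where $f$ vanishes'' heuristic is asymmetric in the sign of $\xi_0$. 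You yourself flag this step as ``the chief obstacle'' and describe it as ``essentially the Sato--Kawai--Kashiwara watermelon argument''; since that argument is the theorem, what you have written is an accurate roadmap to the literature rather than a proof.
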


\noindent Consider the cone of all vectors in $T_{a(t)}\widetilde{\Omega}$ at an angle less than $\epsilon$ with $ \dot{\gamma}_{[a(t),b(t)]}$ for some small properly chosen $\epsilon$. The cone  $C_{\epsilon}(t)$ with its vertex at $a(t) \in \partial \widetilde{\Omega}$ is then the image of the above cone of vectors under the exponential map. We will choose $\epsilon > 0$ such that 
\begin{enumerate}
\item $C_{2\epsilon}(t) \subset \widetilde{\Omega}_{\mathcal{A}}$, $\forall t \in [0,1]$. 
\item $C_{\epsilon}(t) \subset \tilde{V}$ for $0 \leq t \leq t_0$ where $\tilde{V} := V \cup (\widetilde{\Omega}/\Omega)$.
\item No geodesic inside the cone $cl({C}_{2\epsilon}(t)) $, $t_{0} < t < 1$, with vertex at $a(t)$ is tangent to $\partial \Omega$. 

\end{enumerate}

\noindent For any $t$, let us construct a tensor field $h_t$ in $C_{\epsilon}(2t)$ just as in Proposition \ref{prop:Construction of ODE}. Recall that the support of  $h_t$ lies in  $\Omega$. 
Since $C_{\epsilon}(t) \subset \tilde{V}$ for $0 \leq t \leq t_0$ then by Lemma \ref{h in a boundary neighnorhood} we have $h_t = 0$ in $C_{\epsilon}(t) \subset \tilde{V}$. Hence the set $\{ t\in [0,1]: h_t = 0 \text{ in } C_{\epsilon}(t) \}$ is non empty.
Let $t^*=\text{sup} \{ t\in [0,1]: h_t = 0 \text{ in } C_{\epsilon}(t) \}$. We will show: $t^* = 1$. This will imply that there exists a neighborhood $\mathcal{U}$ of $\gamma_{0}$ and a $(m-1)$ tensor field $v \in \mathcal{D}^{\prime}(\widetilde{\Omega_{\mathcal{U}}})$ such that $h=f-\D v=0$ there.

\noindent  Assume $t^*<1$. Then $h_{t^* }= 0$ in $C_{\epsilon}(t^*)$ because $h_{t^*} = 0$ outside $\Omega$. Next we will show that $h_{t^*} = 0$ in $C_{2\epsilon}(t^*)$. This gives us a contradiction, because on increasing $t^*$ slightly to $t$, we can get $C_{\epsilon}(t) \cap \Omega \subset C_{2\epsilon} (t^*) \cap \Omega $ such that $h_t$ is zero in this $C_{\epsilon}(t)$. Here we would like to mention that as $h_t$ is got by solving an Initial Value Problem for a system of ODEs, hence they are locally unique. In particular, if $h_{t^*} = 0 \text{ in }C_{2\epsilon}(t^*)$ and $C_{\epsilon}(t) \cap \Omega \subset C_{2\epsilon} (t^*) \cap \Omega $, then $h_{t} = 0 \text{ in }C_{\epsilon}(t)$ which contradicts the choice for $t^{*}$. To fulfill our program, consider $h_{t^*}$ in $C_{2\epsilon} (t^*)$. As stated earlier, $h_{t^*}=0$ in  $C_\epsilon(t^*)$. Let $\epsilon<\epsilon_{0}\leq2 \epsilon$ be such that $C_{\epsilon_{0}}(t^*)$ is the first cone whose boundary intersects $supp(h_{t^*})$. If no such $\epsilon_0$ can be found then we are done. Let $q \in supp(h_{t^*})\cap \partial C_{\epsilon_0}(t)$. Clearly $q \notin \partial \widetilde{\Omega}$, because $h_{t^*} = 0$ outside $\Omega$. So $q$ is an interior point of $\widetilde{\Omega}$.
\noindent  In $\widetilde{\Omega}$, $(\delta f)_{i_1\dots i_{m-1}} = (\delta(f\chi))_{{i_1\dots i_{m-1}}}$ where $\chi$ is the characteristic function of $\Omega$. But one can first prove the theorem for $f$ such that $f = f^s$ in $\Omega$ and then make the argument for any general $f$. Working first with such tensor fields for which $f=f^s$, one knows that such a tensor field is analytic in $\partial \Omega$ up to $\partial \Omega$, see \cite[Section 5]{KS}.

\noindent Now, \begin{align*}
(\delta(f^s\chi))_{i_1\dots i_{m-1}} &= \left(\nabla_k(f^s_{i_1\dots i_{m-1}j} \chi)  \right)g^{jk} \\
&= \left( \chi \nabla_k f^s_{i_1\dots i_{m-1}j} \right)g^{jk} + f^s_{i_1\dots i_{m-1}j}g^{jk} \nabla_k \chi \\
&= f^s_{i_1\dots i_{m-1}j} \nabla^j\chi\\
&= -f^s_{i_1\dots i_{m-1}j} \nu^j\delta_{\partial \O}, \quad \text{ here }\partial_{\partial \O} \text{ represents dirac delta concentrated at }\partial \O.
\end{align*} 
This shows that the analytic wavefront set of $\delta f$ is in $N^*(\partial \Omega)$. Let $\tilde{\gamma}$ be the geodesic in $\O$ on the surface of $\partial C_{\epsilon_0}(t^*)$ that contains $q$. Because $N^*\tilde{\gamma}$ does not intersect $N^*\partial \Omega$, by Proposition \ref{Prop:Venky's Analogue} and by Lemma \ref{Lemma:Wavefront}, $h$ has no analytic singularities in $N^*\tilde{\gamma}$. Consider a small open set $W$ containing $q$ which is divided by the surface of $\partial C_{\epsilon_0}(t^*)$ into two open connected sets as in the statement of Lemma \ref{S-K-K} and $h_{t^*} = 0$ in one of these open sets. Since the co-normals to $C_{\epsilon_0} (t^*)$ at $q$ are not in $WF_{A}(h_{t^*})$, this implies $q \notin supp(h_{t^*})$ by the Sato-Kawai-Kashiwara theorem mentioned above. This shows that $h_{t^*}=0 \text{ in } C_{2\epsilon} (t^*)$ which in turn implies $t^* = 1$. This proves Lemma \ref{KS_Th1}.

\noindent Using the condition that any closed path with a base point on $\partial \Omega$ is homotopic to a point lying on $\partial \Omega$ and using the geometric arguments in Section 6 of \cite{KS} along with Lemma \ref{KS_Th1}, we conclude the proof of Theorem \ref{Th:Extension theorem}.

\noindent\textbf{Remark: } The symmetric $m-1$ tensor field $v$ also has components in $\mathcal{E}^\prime(\widetilde{\O})$ and is supported in $\Omega$ just like the $m$-tensor field $f$.

\subsection{Proof of Theorem \ref{Th:main Theorem}}
\begin{proof}[Proof of Theorem \ref{Th:main Theorem}]
	We will first prove the following lemma:
	\begin{Lemma}\label{Lemma:iteration}
		For any $ 1\leq k\leq m$, if $f = \D v$ with $v|_{\partial \O} =0$. Then   $I^{k}f =-kI^{k-1}v $.
	\end{Lemma}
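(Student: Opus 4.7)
The plan is to reduce this to a one–line integration by parts once we invoke the standard identity relating the symmetric derivative to differentiation along a geodesic.

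First I would recall (or quickly verify) that for any symmetric $(m-1)$-tensor field $v$ and any geodesic $\gamma$,
\beq
\langle \D v(\gamma(t)),\dot\gamma^{m}(t)\rangle \;=\; \frac{d}{dt}\langle v(\gamma(t)),\dot\gamma^{m-1}(t)\rangle.
\eeq
This is standard and follows from the fact that $\dot\gamma$ is parallel along $\gamma$, so that the Christoffel-symbol terms appearing in $\D v$ are exactly cancelled by the derivative of $\dot\gamma^{i_1}\cdots \dot\gamma^{i_{m-1}}$ when all indices are contracted with $\dot\gamma$; the symmetrization in the definition of $\D$ is harmless since we contract with the symmetric object $\dot\gamma^{\otimes m}$.

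Next I would substitute this identity into the definition of $I^{k}f$ using $f=\D v$:
\beq
I^{k}f(x,\xi)\;=\;\int_{0}^{l(\gamma_{x,\xi})} t^{k}\,\frac{d}{dt}\!\left(v_{i_1\dots i_{m-1}}(\gamma_{x,\xi}(t))\,\dot\gamma_{x,\xi}^{i_1}(t)\cdots\dot\gamma_{x,\xi}^{i_{m-1}}(t)\right) dt,
\eeq
and then integrate by parts in $t$. The boundary contribution at $t=0$ vanishes because $t^{k}=0$ there (here is where we use $k\ge 1$), and the boundary contribution at $t=l(\gamma_{x,\xi})$ vanishes because $\gamma_{x,\xi}(l)\in \partial\O$ and $v|_{\partial\O}=0$. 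What remains is
\beq
I^{k}f(x,\xi)\;=\;-\,k\int_{0}^{l(\gamma_{x,\xi})} t^{k-1}\,v_{i_1\dots i_{m-1}}(\gamma_{x,\xi}(t))\,\dot\gamma_{x,\xi}^{i_1}(t)\cdots\dot\gamma_{x,\xi}^{i_{m-1}}(t)\,dt \;=\; -k\,I^{k-1}v(x,\xi),
\eeq
which is exactly the claim. There is no real obstacle: the only thing to be careful about is justifying the identity in the first display (a direct coordinate computation, or a citation to \cite{Sharafutdinov_Book}), and recording that $k\ge 1$ is precisely what makes the $t=0$ boundary term drop; the $t=l$ term is handled by the hypothesis $v|_{\partial\O}=0$.
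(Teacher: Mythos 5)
Your proof is correct and follows essentially the same route as the paper: both use the identity $\langle \D v(\gamma(t)),\dot\gamma^m(t)\rangle = \frac{d}{dt}\langle v(\gamma(t)),\dot\gamma^{m-1}(t)\rangle$ and then integrate by parts, killing the boundary term at $t=l(\gamma)$ via $v|_{\partial\O}=0$. Your explicit remark that $k\ge 1$ is what kills the $t=0$ endpoint is a small clarification the paper leaves implicit, but the argument is the same.
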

	\begin{proof}
		Consider 
		\begin{align*}
			I^kf(\gamma) &= I^k(\D v)(\gamma)\\
			&= \int_0^{l(\gamma)}t^k(\D v)_{i_1\dots i_m}(\gamma(t))\dot{\gamma}^{i_1}(t)\dots \dot{\gamma}^{i_m}(t)dt\\
			&= \int_0^{l(\gamma)}t^{k} \frac{d}{dt}\{v_{i_1\dots i_{m-1}}(\gamma(t))\dot{\gamma}^{i_1}(t)\dots \dot{\gamma}^{i_{m-1}}(t)\}dt\\
			&=\{t^k v_{i_1\dots i_{m-1}}(\gamma(t))\dot{\gamma}^{i_1}(t)\dots \dot{\gamma}^{i_{m-1}}(t)\}_0^{l(\gamma)} \\
			&\quad \quad - k\int_0^{l(\gamma)} t^{k-1}v_{i_1\dots i_{m-1}}(\gamma(t))\dot{\gamma}^{i_1}(t)\dots \dot{\gamma}^{i_{m-1}}(t)dt\\
			&= -kI^{k-1}v(\gamma),
		\end{align*}
		where first term in the second last equality is $0$ because of our assumption $v|_{\partial \O}=0$. Thus, we have our lemma. 
	\end{proof}
	\noindent Let us come back to the proof of Theorem \ref{Th:main Theorem}. As we know from Theorem \ref{Th:Extension theorem} that if $I^0f(\gamma)=If(\gamma) = 0$ for each geodesic $\gamma$ not intersecting $K$ then there exist $(m-1)$-tensor field $v_1$ which is $0$ on the boundary $\partial \O$ such that $f =\D v_1$ on $\O\setminus K$. And from the Lemma \ref{Lemma:iteration}, we know 
	$$I^1f(\gamma) = I^1(\D v_1)(\gamma) = -I^0v_1(\gamma). $$
	\par \noindent Again using Theorem \ref{Th:Extension theorem} we conclude that there exist $(m-2)-$tensor field $v_2$ such that $v_1 = \D v_2$ and $v_2|_{\partial \O}=0$.  Using Theorem \ref{Th:Extension theorem} along with Lemma \ref{Lemma:iteration} $(m-2)$ more times, we have
	\begin{align*}
		I^mf(\gamma) = m!(-1)^m I^0v_m(\gamma) = 0
	\end{align*}
	where $v_m$ is $0$-tensor i.e. a function. Now using \cite[Theorem 1]{K1}, we can conclude $v_m =0$ on $\O\setminus K$.
	And since $f = \D^mv_m$ on  an open connected set $\O\setminus K$ therefore $f$ is also $0$ on $\O\setminus K$. 
\end{proof}
 \section{Proof of Theorem \ref{Th:Injectivity}}
 To prove Theorem \ref{Th:Injectivity}, we will need the $s$-injectivity of the ray transform for symmetric $m$-tensor fields. The proof of $s$-injectivity for symmetric $2$-tensor fields is given in \cite{SU2}. The same proof will also work for a symmetric tensor field of any order. For details, we will refer the reader to  \cite[Sections 2,3,4]{SU2}. Hence we have,
 \begin{theorem}\cite[Theorem 1.4]{SU2}
 	Let $(\O ,g)$ be a compact simple real analytic manifold with smooth boundary and $f$ be a symmetric $m$-tensor field with components in $L^2(\O)$. If $I^{0}f(\gamma)=0$ for all $\gamma$ which are geodesics in $\O$, then $f^{s}=0$ in $\O$.
 \end{theorem}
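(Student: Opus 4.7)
\noindent\textbf{Proof proposal for Theorem \cite[Theorem 1.4]{SU2}.} The plan is to mimic verbatim the strategy of Stefanov--Uhlmann for symmetric $2$-tensors, noting that the only step that genuinely used $m=2$ is the computation of the principal symbol of the normal operator, which goes through for arbitrary $m$ with essentially the same linear algebra. First I would apply the decomposition of Theorem \ref{Th:Decomposition} on $\O$ to write $f = f^s + \D v$ with $\delta f^s = 0$ and $v|_{\partial \O}=0$. Since $v$ vanishes on $\partial \O$, the fundamental theorem of calculus along each maximal geodesic gives $I^{0}(\D v)(\gamma)=0$, so the hypothesis $I^{0}f=0$ reduces to $I^{0}f^s=0$. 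Extend $f^s$ by zero across $\partial \O$ to a tensor field supported in $\O$ on the real analytic extension $\wt{\O}$.

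Next I would introduce the normal operator $N = (I^{0})^{*}I^{0}$, where the adjoint is computed with a suitable measure so that geodesics with endpoints in $\wt{\O}\setminus \O$ are used. Following the construction in \cite{SU2} (which goes through in any order), $N$ is a classical analytic pseudodifferential operator of order $-1$ on the interior of $\wt{\O}$, whose principal symbol $\sigma_P(N)(x,\xi)$ is expressible as an integral, over unit vectors $\theta$ conormal to $\xi$, of the symmetric rank-$m$ tensor $\theta^{\otimes m}\otimes \theta^{\otimes m}$ against a positive density. The key algebraic step is to verify that, on the subspace of solenoidal tensors at $(x,\xi)$, this symbol is positive definite; this follows because the only symmetric $m$-tensor $f$ satisfying $f_{i_{1}\dots i_{m}}\theta^{i_{1}}\cdots \theta^{i_{m}}=0$ for all $\theta\perp\xi$ together with the solenoidal relation $\xi^{j}f_{j i_{2}\dots i_{m}}$ being compatible with $\delta f=0$ is $f=0$ --- the same linear-algebra lemma used in the proof of Proposition \ref{S-U analogue} (and in \cite{Krishnan2016}) guarantees this for any $m$.

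With ellipticity of $N$ on the solenoidal subspace established, the microlocal argument proceeds exactly as in \cite[Section 3]{SU2}: because $I^{0}f^s=0$ implies $Nf^s=0$, and $N$ is an elliptic analytic $\Psi$DO on solenoidal tensors with an analytic parametrix modulo analytically smoothing operators, the analytic wavefront set of $f^s$ is empty in the interior of $\wt{\O}$. Hence $f^s$ is real analytic there. Since $f^s$ extends by zero to a neighborhood in $\wt{\O}\setminus \O$ (here one uses, as in \cite{SU2}, that the solenoidal decomposition on $\wt{\O}$ differs from the one on $\O$ only by a potential tensor whose difference on $\wt{\O}\setminus \O$ is analytic and controllable), the unique continuation property of real analytic functions forces $f^s\equiv 0$ on $\O$.

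The main obstacle I expect is the ellipticity/positivity verification of $\sigma_P(N)$ on the solenoidal subspace for general $m$: one must carefully disentangle the tensorial structure and confirm that no nontrivial solenoidal tensor lies in the kernel of the averaging map $f \mapsto \int_{S^{n-2}_{\xi^{\perp}}} f_{i_{1}\dots i_{m}}\theta^{i_{1}}\cdots \theta^{i_{m}}\,\theta^{\otimes m}\,d\theta$. A secondary but more technical obstacle is the global comparison of the solenoidal decompositions on $\O$ and $\wt{\O}$, which is handled by solving a boundary value problem for the Laplacian on potentials and invoking analytic regularity up to the boundary, exactly as in the two-tensor case.
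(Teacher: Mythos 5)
Your proposal follows essentially the same route as the paper, which gives no independent argument here but simply cites \cite{SU2} and observes that the two‑tensor proof (normal operator $N=(I^0)^*I^0$ as an analytic $\Psi$DO elliptic on solenoidal tensors, analytic wavefront analysis, then unique continuation) carries over verbatim to order $m$; your sketch is a faithful outline of that proof and correctly isolates the two points that actually need checking, namely ellipticity of $\sigma_P(N)$ on solenoidal $m$-tensors and the comparison of the solenoidal decompositions on $\O$ and $\wt{\O}$. The one phrase to repair is ``$f^s$ extends by zero to a neighborhood in $\wt{\O}\setminus\O$'': the $\wt{\O}$-solenoidal projection does not vanish there but equals a potential field $-\D v$ with $v$ analytic up to, and vanishing on, $\partial\wt{\O}$, and it is this ODE-with-zero-initial-data argument (as in \cite{SU2}), not literal vanishing of $f^s$ outside $\O$, that starts the analytic continuation.
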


 \begin{theorem}\label{Decomposition Result}
  Let $\O$ be a compact simple Riemannian manifold with boundary. Let $m\geq0$ and $p\geq m$ be integers. Then for any $f \in L^2(S^m(\O))$, there exist uniquely determined $v_0, \dots , v_m$ with $v_i \in H^i(S^{m-i}\O)$ for $i = 0,1, \dots, m$ such that
 	\begin{align*}
 	f = \sum_{i=0}^m \D^iv_i, \quad \mbox{ with }  v_{i} \text{ solenoidal for } 0\leq i \leq  m-1 \\ \text{ and for each }  0\leq i \leq m-1, \quad 
 	\sum_{j=0}^i \D ^jv_{m-i+j} =0 \text{ on } \partial \Omega. \end{align*}
 \end{theorem}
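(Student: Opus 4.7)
The plan is to obtain the decomposition by iteratively applying the standard solenoidal/potential decomposition of Theorem \ref{Th:Decomposition} exactly $m$ times, and to derive uniqueness inductively from uniqueness at each stage. Set $w_0 := f \in L^2(S^m(\O)) = H^0(S^m(\O))$. By Theorem \ref{Th:Decomposition} (with $k=0$), there exist unique $v_0 \in H^0(S^m(\O))$ and $w_1 \in H^1(S^{m-1}(\O))$ with
\beq
w_0 = v_0 + \D w_1, \qquad \delta v_0 = 0, \qquad w_1|_{\PD \O} = 0.
\eeq
Now apply Theorem \ref{Th:Decomposition} to $w_1 \in H^1(S^{m-1}(\O))$ to produce $v_1 \in H^1(S^{m-1}(\O))$ solenoidal and $w_2 \in H^2(S^{m-2}(\O))$ with $w_2|_{\PD\O}=0$ and $w_1 = v_1 + \D w_2$. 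Iterate: having obtained $w_i \in H^i(S^{m-i}(\O))$ with $w_i|_{\PD\O}=0$ for $i\ge 1$, decompose $w_i = v_i + \D w_{i+1}$ with $v_i$ solenoidal in $H^i(S^{m-i}(\O))$ and $w_{i+1} \in H^{i+1}(S^{m-i-1}(\O))$ vanishing on $\PD\O$. Stop at $i = m-1$ and set $v_m := w_m \in H^m(\O)$. Telescoping yields $f = \sum_{i=0}^m \D^i v_i$ with $v_i$ solenoidal for $0 \leq i \leq m-1$.

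For the boundary conditions, observe that the iteration gives the explicit formula
\beq
w_{m-i} \;=\; \sum_{j=0}^{i} \D^j v_{m-i+j}, \qquad i = 0, 1, \dots, m-1,
\eeq
which is verified by induction on $i$: it holds for $i=0$ since $w_m = v_m$, and the step follows from $w_{m-i-1} = v_{m-i-1} + \D w_{m-i}$. Since $w_{m-i}|_{\PD\O}=0$ for each $i \in \{0,1,\dots,m-1\}$, the required boundary relations $\sum_{j=0}^i \D^j v_{m-i+j} = 0$ on $\PD\O$ follow at once.

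For uniqueness, suppose $\sum_{i=0}^m \D^i v_i = 0$ with all the stated properties. Group the sum as $v_0 + \D \widetilde{w}_1$ where $\widetilde{w}_1 := \sum_{j=0}^{m-1} \D^j v_{1+j}$. By the boundary condition with $i=m-1$, $\widetilde{w}_1|_{\PD \O} = 0$, and $v_0$ is solenoidal, so the uniqueness clause of Theorem \ref{Th:Decomposition} forces $v_0 = 0$ and $\widetilde{w}_1 = 0$. Repeating the argument with $\widetilde{w}_1$ (whose boundary vanishing is exactly the $i = m-2$ condition), we get $v_1 = 0$ and $\widetilde{w}_2 := \sum_{j=0}^{m-2} \D^j v_{2+j} = 0$, and so on down to $v_m = 0$.

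The one subtle point, and the place one must be careful, is the bookkeeping that matches the boundary relations produced by the iteration (vanishing of each intermediate potential $w_{m-i}$ on $\PD\O$) with the condition stated in the theorem (vanishing of $\sum_{j=0}^i \D^j v_{m-i+j}$ on $\PD \O$); the identity above makes these equivalent. The regularity statement $v_i \in H^i(S^{m-i}(\O))$ is automatic from the fact that each application of Theorem \ref{Th:Decomposition} gains one degree of Sobolev regularity for the potential part. This yields both existence and uniqueness, proving the theorem (the hypothesis $p \geq m$ plays no role in this argument and appears to be a vestigial assumption).
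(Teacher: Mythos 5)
Your proof is correct and follows exactly the route the paper takes: its entire proof of this theorem is the single sentence ``This follows from a repeated application of \cite[Theorem 3.3.2]{Sharafutdinov_Book},'' and your argument is precisely that iteration, with the telescoping, boundary bookkeeping, and inductive uniqueness spelled out. (Your parenthetical observations that the first application uses the $k=0$ case of the decomposition and that the hypothesis $p\geq m$ is vestigial are both fair.)
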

 \begin{proof}
	This follows from a repeated application of \cite[Theorem 3.3.2]{Sharafutdinov_Book}.
\end{proof}

 \begin{proof}[Proof of Theorem \ref{Th:Injectivity}]
 	We have from Theorem \ref{Decomposition Result} that 
 	\begin{align}\label{Boundary values}
 	& \notag  f = \sum_{i=0}^m \D^iv_i, \quad \mbox{ with }  v_{i} \text{ solenoidal for } 0\leq i \leq  m-1 \\ 
 	& \text{ and for each }  0\leq i \leq m-1, \quad 
 	\sum_{j=0}^i \D ^jv_{m-i+j} =0 \text{ on } \partial \Omega. 
 	\end{align}
 	Using $s$-injectivity of $I$, we know that $v_{0} =0$, since it is solenoidal. Now consider
 	\begin{align*}
 0=	I^1f(\gamma) &= I^1\left(\sum_{i=0}^m \D^iv_i\right)(\gamma)\\
 	&= I^1\left(\D\left(\sum_{i=1}^m \D^{i-1}v_i\right)\right)(\gamma), \quad \quad \text{ since }v_0 =0\\
 	&=-I^0\left(\sum_{i=1}^m \D^{i-1}v_i\right)(\gamma) \quad \quad \text{(using Lemma \ref{Lemma:iteration})}
 	\end{align*}
 	From this, we can conclude $v_1$ is also $0$ because it is solenoidal part of tensor field $\sum_{i=1}^m \D^{i-1}v_i$. 
 	\par Now suppose that $v_{1},\cdots,v_{k}$ can be shown to be equal to $0$ from the knowledge of $I^{1}f,\cdots,I^{k}f$. Then
 	\begin{align*}
 0=	I^{k+1}\lb f-\sum\limits_{0}^{k} \D^{i} v_{i}\rb &=I^{k+1}\lb \sum\limits_{i=k+1}^{m} \D^{i} v_{i}\rb\\
 	\Rightarrow \hspace{3.2cm} I^{k+1}\lb \sum\limits_{i=k+1}^{m} \D^{i} v_{i}\rb&=  0\\
 		\Rightarrow (-1)^{k+1}(k+1)! I^0\left(\sum_{i=k+1}^m \D^{i-k-1}v_i\right)&=  0, \quad \text{(using Lemma \ref{Lemma:iteration},}\ \  (k+1)\ \  \text{times}).
 	\end{align*}
  Therefore $v_{k+1}=0$  because it is the  solenoidal part of the  tensor field $\left(\sum_{i=k+1}^m \D^{i-k-1}v_i\right)$. By induction, the proof is now complete. 
 \end{proof}

%
%

\section{Appendix}

\begin{proof}[Proof of Lemma \ref{Lemma:expansion of dv}]
First, let us recall for a $(m-1)$-tensor field $v$,
\begin{align*}
(\D v)_{i_1\dots i_m} = \sigma(i_1,\dots,i_m)\left(\frac{\partial v_{i_1\dots i_{m-1}}}{\partial x^{i_m}}-\sum_{l=1}^{m-1}\Gamma_{i_m i_l}^pv_{i_1,\dots i_{l-1} p i_{l+1}\dots i_{m-1}}\right).
\end{align*}
We will prove this result for $k=0,1,2$ and then for general $k\leq m$. 
\begin{align*}
(\D v)_{n\dots n } &=\frac{\partial v_{n\dots n}}{\partial x^{n}},\quad  \mbox{for } k=0\\
(\D v)_{n\dots n i} &= \frac{m-1}{m}\frac{\partial v_{n\dots n i }}{\partial x^n}-\frac{2(m-1)}{m}\Gamma_{in}^pv_{n\dots n p}+\frac{1}{m}\frac{\partial v_{n\dots n }}{\partial x^i},\quad  \mbox{for } k=1
\end{align*}
And for $k=2$, we have
\begin{align*}
(\D v)_{n \dots n ij}&= \sigma(n,\dots,n,i,j)\left(\frac{\partial v_{n\dots ni}}{\partial x^j}-\Gamma_{ij}^pv_{n\dots np} - (m-2)\Gamma_{nj}^p v_{n\dots n ip}\right)\\
&= \frac{\sigma(n,\dots, n , i)}{m}\left\{(m-1) \frac{\partial v_{ n\dots n i j}}{\partial x^n}+\frac{\partial v_{n\dots n i}}{\partial x^j}-2\Gamma_{ij}^pv_{n\dots np}-(m-2)\Gamma_{in}^p v_{n\dots n pj} \right. \\
& \ \ \ \left.-2(m-2)\Gamma_{nj}^p v_{n\dots n ip}- (m-2)^2\Gamma_{in}^p v_{n\dots n pj}   \right\}\\
&= \frac{\sigma(n,\dots, n , i)}{m}\left\{(m-1) \frac{\partial v_{ n\dots n i j}}{\partial x^n}+\frac{\partial v_{n\dots n i}}{\partial x^j}-2\Gamma_{ij}^pv_{n\dots np}-2(m-2)\Gamma_{nj}^p v_{n\dots n ip}\right. \\
& \ \ \ \left.- (m-1)(m-2)\Gamma_{in}^p v_{n\dots n pj}   \right\}\\
&= \frac{1}{m(m-1)}\left\{(m-1)\left( (m-2)\frac{\partial v_{ n\dots n i j}}{\partial x^n}+\frac{\partial v_{n\dots n j}}{\partial x^i}\right)+(m-1)\frac{\partial v_{n\dots n i}}{\partial x^j}\right.\\
&\quad  - 2 (\Gamma_{ij}^pv_{n\dots np}+(m-2)\Gamma_{nj}^pv_{n\dots npi})- 2(m-2)\left(\Gamma_{ij}^p v_{n\dots n p}+(m-2)\Gamma_{nj}^p v_{n\dots n pi}\right)\\
&\quad -  \left.2(m-1)(m-2)\Gamma_{in}^p v_{n\dots n pj}\right\}\\
&= \frac{1}{m(m-1)}\left\{(m-1)(m-2)\frac{\partial v_{ n\dots n i j}}{\partial x^n}+(m-1)\frac{\partial v_{n\dots n j}}{\partial x^i}+(m-1)\frac{\partial v_{n\dots n i}}{\partial x^j}\right.\\
&\quad  - \left.2 (m-1)\Gamma_{ij}^pv_{n\dots np}- 2(m-1)(m-2)\Gamma_{nj}^pv_{n\dots npi}  -2(m-1)(m-2)\Gamma_{in}^p v_{n\dots n pj}\right\}\\ 
&= \frac{1}{m}\left\{(m-2)\frac{\partial v_{ n\dots n i j}}{\partial x^n}+\frac{\partial v_{n\dots n j}}{\partial x^i}+\frac{\partial v_{n\dots n i}}{\partial x^j} - 2(m-2)\Gamma_{nj}^pv_{n\dots npi}  -2(m-2)\Gamma_{in}^p v_{n\dots n pj}\right.\\
& \quad \left.-2 \Gamma_{ij}^pv_{n\dots np}\right\}\\ 
&= \frac{m-2}{m}
\left\{ \frac{\partial v_{ n\dots n i j}}{\partial x^n}- 2\Gamma_{nj}^pv_{n\dots npi}  -2\Gamma_{in}^p v_{n\dots n pj}\right\}+ \frac{1}{m}\left\{ \frac{\partial v_{n\dots n j}}{\partial x^i}+\frac{\partial v_{n\dots n i}}{\partial x^j} -2 \Gamma_{ij}^pv_{n\dots np}\right\}.
\end{align*}
From above, we see that the result is true for $k=0, 1$ and $2$. Now, we are going to prove that the result is also true for $k\leq m$. Consider 
\begin{align*}
(\D v)_{n\dots n i_k\dots i_1} &= \sigma(n, \dots n ,i_k ,\dots, i_1)\left( \frac{\partial v_{n\dots n i_k\dots i_2}}{\partial x^{i_{1}}}- \sum_{l=2}^{k} \Gamma_{i_li_{1}}^p v_{n\dots n i_k\dots\hat{i_l} \dots i_2p}- (m-k)\Gamma_{n i_{1}}^pv_{n\dots n i_k\dots i_2p} \right)\\
&=  J+J^1_{k}+ (m-k)J^2_{k}.
\end{align*} 
where
\begin{align*}
J &=\sigma(n, \dots n ,i_k ,\dots, i_1)\left( \frac{\partial v_{n\dots n i_k\dots i_2}}{\partial x^{i_{1}}}\right),\\
J^1_{k} &= \sigma(n, \dots n ,i_k ,\dots, i_1)\left(  \sum_{l=2}^{k} \Gamma_{i_li_{1}}^p v_{n\dots n i_k\dots\hat{i_l} \dots i_2p}\right),\\
\mbox{ and }\quad  J^2_{k} &= \sigma(n, \dots n ,i_k ,\dots, i_1)\left(\Gamma_{n i_{1}}^pv_{n\dots n i_k\dots i_2p}\right).
\end{align*}
\begin{align*}
J &= \sigma(n, \dots n ,i_k ,\dots, i_1)\left(\frac{\partial v_{n\dots n i_k\dots i_2}}{\partial x^{i_1}}\right)\\
& = \frac{\sigma(n, \dots n ,i_k ,\dots, i_2)}{m}\left(\frac{\partial v_{n\dots n i_k\dots i_2}}{\partial x^{i_1}} + (m-1)\frac{\partial v_{n\dots n i_k\dots i_1}}{\partial x^{n}}\right)\\
&= \frac{1}{m}\frac{\partial v_{n\dots n i_k\dots i_2}}{\partial x^{i_1}} +  \frac{\sigma(n, \dots n ,i_k ,\dots, i_{3})}{m}\left(\frac{\partial v_{n\dots n i_k\dots i_{3}i_{1}}}{\partial x^{i_{2}}}+(m-2)\frac{\partial v_{n\dots n i_k\dots i_{1}}}{\partial x^{n}}\right)\\
&= \frac{1}{m}\sum_{l=1}^{k}\frac{\partial v_{n\dots n i_k\dots i_{l-1} i_{l+1}\dots  i_{1}}}{\partial x^{i_l}}+ \frac{m-k}{m}\frac{\partial v_{n\dots n i_k\dots i_{1}}}{\partial x^{n}},\quad \text{ repeating similar arguments }.
\end{align*}
		\begin{align*}
	J^2_{k} &= \sigma(n, \dots n ,i_k ,\dots, i_1)\left(\Gamma_{n i_{1}}^pv_{n\dots n i_k\dots i_2p}\right)\\
	&= \frac{\sigma(n, \dots n ,i_k ,\dots, i_2)}{m}\left(2\Gamma_{n i_{1}}^pv_{n\dots n i_k\dots i_2p}+ (m-2) \Gamma_{n i_{2}}^pv_{n\dots n i_k\dots i_3 i_1p}\right)\\
	&= \frac{2\sigma(n, \dots n ,i_k ,\dots, i_3)}{m(m-1)}\left(\Gamma_{i_2 i_{1}}^pv_{n\dots n i_k\dots i_3p}+(m-2)\Gamma_{ni_{1}}^pv_{n\dots n i_k\dots i_2p}\right)\\
	&\quad + \frac{(m-2)\sigma(n, \dots n ,i_k ,\dots, i_3)}{m(m-1)}\left(2 \Gamma_{n i_{2}}^pv_{n\dots n i_k\dots i_3 i_1p} + (m-3)\Gamma_{n i_{3}}^pv_{n\dots n i_k\dots i_4i_2 i_1p}\right)\\
	&= \frac{2}{m(m-1)}\Gamma_{i_2 i_{1}}^pv_{n\dots n i_k\dots i_3p}+\frac{2(m-2)\sigma(n, \dots n ,i_k ,\dots, i_3)}{m(m-1)}\sum_{q=1}^2\Gamma_{n i_{q}}^pv_{n\dots n i_k\dots i_3 \hat{i_q}i_1p}\\
	&\quad + \frac{(m-3)(m-2)\sigma(n, \dots n ,i_k ,\dots, i_3)}{m(m-1)}\Gamma_{n i_{3}}^pv_{n\dots n i_k\dots i_2 i_1p}\\
	&= \frac{2}{m(m-1)}\sum_{q,r=1,q\neq r}^3\Gamma_{i_q i_{r}}^pv_{n\dots n i_k\dots i_4\hat{i_q}\hat{i_r}i_1p}+\frac{2(m-3)\sigma(n, \dots n ,i_k ,\dots, i_4)}{m(m-1)}\sum_{q=1}^3\Gamma_{n i_{q}}^pv_{n\dots n i_k\dots i_4 \hat{i_q} i_1p}\\
	&\quad + \frac{(m-4)(m-3)\sigma(n, \dots n ,i_k ,\dots, i_4)}{m(m-1)}\Gamma_{n i_{4}}^pv_{n\dots n i_k\dots i_5i_3 i_2 i_1p},\\
	& \mbox{ repeating similar calculation  $(k-3)$  times, we get}\\
	&= \frac{2(k-1)}{m(m-1)}\sum_{q,r =1, q\neq r}^k\Gamma_{i_qi_{r}}^p v_{n\dots n i_k\dots\hat{i_q}\dots \hat{i_r}\dots i_1p}+\frac{2(m-k)}{m(m-1)}\sum_{q=1}^k\Gamma_{ni_q}^p v_{n\dots n i_k\dots\hat{i_q} \dots i_1p}. 
	\end{align*}
	\begin{align*}
	J^1_{k} &= \sigma(n, \dots n ,i_k ,\dots, i_1)\left(  \sum_{l=2}^{k} \Gamma_{i_li_{1}}^p v_{n\dots n i_k\dots\hat{i_l} \dots i_2p}\right)\\
	&= \frac{\sigma(n, \dots n ,i_k ,\dots, i_2)}{m}\left(2\sum_{l=2}^{k} \Gamma_{i_li_{1}}^p v_{n\dots n i_k\dots\hat{i_l} \dots i_2p}+(m-2)\sum_{l=3}^{k} \Gamma_{i_li_2}^p v_{n\dots n i_k\dots\hat{i_l} \dots i_3i_1p}\right.\\
	&\quad +\left.(m-2) \Gamma_{i_3i_{2}}^p v_{n\dots n i_k\dots i_4i_1p}\right)\\
	&= \frac{\sigma(n, \dots n ,i_k ,\dots, i_3)}{m(m-1)}\left\{2(k-1) \Gamma_{i_2i_{1}}^p v_{n\dots n i_k\dots i_3p}+(m-2)\left(2\sum_{l=3}^{k} \Gamma_{i_li_{1}}^p v_{n\dots n i_k\dots\hat{i_l} \dots i_2p}\right.\right.\\
	&\quad  +2\Gamma_{i_3i_{1}}^p v_{n\dots n i_k \dots i_4 i_2p}+  2\sum_{l=3}^{k} \Gamma_{i_li_2}^p v_{n\dots n i_k\dots\hat{i_l} \dots i_3i_1p}+(m-3)\sum_{l=4}^{k} \Gamma_{i_li_3}^p v_{n\dots n i_k\dots\hat{i_l} \dots i_4i_2 i_1p}\\
	&\quad \left.+ (m-3) \Gamma_{i_3i_4}^p v_{n\dots n i_k\dots i_5i_2i_1p}+2 \Gamma_{i_3i_{2}}^p v_{n\dots n i_k\dots i_4i_1p}+(m-3) \Gamma_{i_3i_{4}}^p v_{n\dots n i_k \dots i_5i_2i_1p}\Big{)}\right\}\\
	&= \frac{(m-2)\sigma(n, \dots n ,i_k ,\dots, i_3)}{m(m-1)}\left\{2\sum_{q=1}^2\left(\sum_{l=3}^{k}\Gamma_{i_li_q}^p v_{n\dots n i_k\dots\hat{i_l} \dots \hat{i_q}i_1p} +\Gamma_{i_3i_{q}}^p v_{n\dots n i_k \dots i_4\hat{i_q}i_1p} \right)\right.  \\
	&\quad  + \left.(m-3)\left(\sum_{l=4}^{k} \Gamma_{i_li_3}^p v_{n\dots n i_k\dots\hat{i_l} \dots i_4i_2i_1p}+2\Gamma_{i_3i_4}^p v_{n\dots n i_k \dots i_5i_2i_1p}\right)\right\}+\frac{2(k-1)}{m(m-1)} \Gamma_{i_2i_{1}}^p v_{n\dots n i_k\dots i_3p}\\
	&= \frac{(m-3)\sigma(n, \dots n ,i_k ,\dots, i_4)}{m(m-1)}\left\{2\sum_{q=1}^3\left(\sum_{l=4}^{k}\Gamma_{i_li_q}^p v_{n\dots n i_k\dots\hat{i_l} \dots \hat{i_q}i_1p}\right) +4\sum_{q=1}^3\left(\Gamma_{i_4i_{q}}^p v_{n\dots n i_k \dots \hat{i_q}i_1p} \right)\right.  \\
	&\quad  + \left.(m-4)\left(\sum_{l=5}^{k} \Gamma_{i_li_4}^p v_{n\dots n i_k\dots\hat{i_l} \dots i_1p}+3\Gamma_{i_5i_4}^p v_{n\dots n i_k \dots i_1p}\right)\right\}+\frac{2(k-1)}{m(m-1)}\sum_{q,r =1, q\neq r}^3\Gamma_{i_qi_{r}}^p v_{n\dots n i_k\dots\hat{i_q}\hat{i_r} i_1p}\\
	&\mbox{Repeating this expansion for $(k-2)$ times more to get}\\
	J^1_{k}  &= \frac{(m-k+1)\sigma(n, \dots, n ,i_{k})}{m(m-1)}\left\{2\sum_{q=1}^{k-1}\Gamma_{i_ki_q}^p v_{n\dots n i_{k-1}\dots\hat{i_l} \dots i_1p} +2(k-2)\sum_{q=1}^{k-1}\Gamma_{i_ki_q}^p v_{n\dots n i_{k-1}\dots\hat{i_l} \dots i_1p}\right.  \\
	&\quad  + (m-k)(k-1)\Gamma_{ni_k}^p v_{n\dots n i_{k-1} \dots i_1p}\Big{\}}+\frac{2(k-1)}{m(m-1)}\sum_{q,r =1, q\neq r}^{k-1}\Gamma_{i_qi_{r}}^p v_{n\dots n i_k\dots\hat{i_q}\hat{i_r} i_1p}\\
	&= \frac{(m-k+1)\sigma(n, \dots, n ,i_{k})}{m(m-1)}\left\{2(k-1)\sum_{q=1}^{k-1}\Gamma_{i_ki_q}^p v_{n\dots n i_{k-1}\dots\hat{i_q} \dots i_1p}\right.  \\
	&\quad  + (m-k)(k-1)\Gamma_{ni_k}^p v_{n\dots n i_{k-1} \dots i_1p}\Big{\}}+\frac{2(k-1)}{m(m-1)}\sum_{q,r =1, q\neq r}^{k-1}\Gamma_{i_qi_{r}}^p v_{n\dots n i_k\dots\hat{i_q}\hat{i_r} i_1p}\\
	&= \frac{2(k-1)}{m(m-1)}\sum_{q,r =1, q\neq r}^k\Gamma_{i_qi_{r}}^p v_{n\dots n i_k\dots\hat{i_q}\dots \hat{i_r}\dots i_1p}+\frac{2(k-1)(m-k)}{m(m-1)}\sum_{q=1}^k\Gamma_{ni_q}^p v_{n\dots n i_k\dots\hat{i_q} \dots i_1p}
	\end{align*}
	After putting the values of $ J, J^1_k$ and $J_k^2$ in $\D v$, we get
	\begin{align*}
	(\D v)_{n\dots n i_k\dots i_1} &=\frac{(m-k)}{m}\frac{\partial v_{n\dots ni_k\dots i_1}}{\partial x^n} -\frac{2(m-k)}{m} \sum_{l=1}^k\Gamma_{i_l n}^p v_{n\dots n i_k\dots \hat{i_l}\dots i_1 p}\\ 
	&\quad +\frac{1}{m}\sum_{l=1}^k \frac{\partial v_{n\dots ni_k\dots \hat{i_l}\dots  i_1}}{\partial x^{i_l}}-\frac{2}{m} \sum_{l,q=1, l \neq q}^k\Gamma_{i_l i_q}^p v_{n\dots n i_k\dots \hat{i_l}\dots\hat{i_q}\dots i_1 p}.
	\end{align*}
\end{proof}

\begin{proof}[Proof of estimate \eqref{eq:8}]
Let $^{t}L= \frac{\Phi_{\xi}.\partial_{\xi}}{i\lambda|\Phi_\xi|^{2}}$. Then as already noted
\begin{equation*}
^{t}L^{N}(e^{i\lambda \Phi(x,\xi)})=e^{i\lambda \Phi(x,\xi)}. 
\end{equation*} 
Consider, \begin{align*}
&\left|	\iint_{|x-y|>\delta/C_{0}} (^{t}L^{N}(e^{i\lambda \Phi(y,x,\xi,\eta)}))\tilde{\tilde{a_{N}}}(x,\xi)f_{i_{1}....i_{m}}(z){\tilde{b}}^{i_{1}}(x,\xi)...{\tilde{b}}^{i_{m}}(x,\xi)dxd\xi \right|\\& \quad \quad \leq \left|	\iint_{|x-y|>\delta/C_{0}} e^{i\lambda \Phi(y,x,\xi,\eta)}L^{N}(\tilde{\tilde{a_{N}}}(x,\xi)f_{i_{1}....i_{m}}(z){\tilde{b}}^{i_{1}}(x,\xi)...{\tilde{b}}^{i_{m}}(x,\xi))dxd\xi \right|\\ 
&\quad \quad \quad  + N\int_{|x-y|>\delta/C_{0}}e^{-\lambda \delta^{2}/2}\left|f_{i_{1}...i_{m}}(x)B^{i_{1}...i_{m}}(x,\xi_{bdry})\right|dx.
\end{align*}
Using the fact that, $f$ is compactly supported and using \eqref{eq:estimate on cutoff}, we get \eqref{eq:8}.
\end{proof}

\begin{proof}[Proof of the estimate \eqref{eq:10}]
Consider 
$$ \left|	\iint_{|x-y|<\delta/C_{0}} \left(e^{i\lambda \Phi(y,x,\xi,\eta)}\right)\tilde{\tilde{a_{N}}}(x,\xi)f_{i_{1}\dots
	i_{m}}(z){\tilde{b}}^{i_{1}}(x,\xi)\cdots{\tilde{b}}^{i_{m}}(x,\xi)dxd\xi \right|.$$
Rewrite the above as : 
$$ \left|	\int_{|x-y|<\delta/C_{0}}(e^{i\lambda \Phi(y,x,\xi_{c},\eta)})(e^{-i\lambda \Phi(y,x,\xi_{c},\eta)})\int_{|\xi-\eta|<\delta/C_{0}} (e^{i\lambda \Phi(y,x,\xi,\eta)})\tilde{\tilde{a_{N}}}(x,\xi)f_{i_{1}\dots i_{m}}(z){\tilde{b}}^{i_{1}}(x,\xi)\cdots{\tilde{b}}^{i_{m}}(x,\xi)dxd\xi \right|$$
Using (2.10) of \cite{Sjostrand}  to the above, we get
\begin{align*}
&\left|	\int_{|x-y|<\delta/C_{0}} (e^{i\lambda \Phi(y,x,\xi_{c},\eta)})f_{i_{1}\dots i_{m}}(x)\sum_{0\leq k \leq \lambda /C}C_{n}\frac{1}{k!}\lambda^{-n/2-k}\frac{(\bigtriangleup^{k})}{2}\big(\tilde{\tilde{a_{N}}}(x,\xi_{c}){\tilde{b}}^{i_{1}}(x,\xi_{c})\cdots{\tilde{b}}^{i_{m}}(x,\xi_{c})\big)\right.\\
&\quad \quad \quad +R(x,y,\eta,\lambda)dx \big{|}
\end{align*}
\begin{Lemma}
	$$ \sum_{0\leq k \leq \lambda /C}C_{n}\frac{1}{k!}\lambda^{-n/2-k}\frac{(\bigtriangleup^{k})}{2}\big(\tilde{\tilde{a_{N}}}(x,\xi_{c}){\tilde{b}}^{i_{1}}(x,\xi_{c})\cdots{\tilde{b}}^{i_{m}}(x,\xi_{c})\big)$$ is a formal analytic symbol.
\end{Lemma}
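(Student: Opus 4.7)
The plan is to verify, in the standard Sj\"ostrand sense, that the partial sum has a unique holomorphic extension to a fixed complex neighborhood of the base point $(0,0,\xi_0)$ and that on this neighborhood the coefficient of $\lambda^{-n/2-k}$ is bounded by $C^{k+1}\, k!$ uniformly in $(x,y,\eta)$. This reduces to combining analyticity of the constituents with Cauchy's inequality, following the template of \cite{Sjostrand}.

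First I would set
\[
g(x,\xi) \;=\; \tilde{\tilde{a_N}}(x,\xi)\, \tilde{b}^{i_1}(x,\xi)\cdots \tilde{b}^{i_m}(x,\xi)
\]
and collect the analyticity properties of the factors. The vectors $\tilde{b}^{i_j}$ arise from the geodesic flow on the real-analytic manifold $\widetilde{\O}$ and so are real-analytic in $(x,\xi)$; they extend holomorphically to a fixed polydisk $V \subset \mathbb{C}^{n}\times\mathbb{C}^{n}$ around $(0,\xi_0)$. The amplitude $\tilde{\tilde{a_N}}$ is real-analytic where the cut-off $\tilde{\chi}$ equals one and satisfies \eqref{eq:estimate on cutoff}, that is $|\partial^\alpha \tilde{\tilde{a_N}}|\leq (CN)^{|\alpha|}$ for $|\alpha|\leq N$, which is exactly the Cauchy-type estimate a genuine analytic function on a disc of radius $\sim 1/(CN)$ would satisfy. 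Choosing $N \sim \lambda/Ce$ as in the paragraph after \eqref{eq:8} makes this estimate available uniformly for every derivative that can arise when $k\leq \lambda/C$.

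Second, I would apply the analytic implicit function theorem to the critical-point equation $\Phi_\xi = \phi_\xi(x,\xi)-\phi_\xi(y,\xi)+i(\xi-\eta) = 0$. The phase $\Phi$ is analytic, its $\xi$-Hessian at the critical point is $i\,\mathrm{Id}$ at leading order (non-degeneracy was already noted before Lemma \ref{Lemma:1}), and Lemma \ref{Lemma:1} guarantees uniqueness of the critical point in the relevant range. This produces a unique holomorphic $\xi_c = \xi_c(x,y,\eta)$ on a complex neighborhood of $(0,0,\xi_0)$. With $g$ holomorphic on $V$ and $\xi_c$ holomorphic in its arguments, Cauchy's formula on a polydisk of fixed radius $r>0$ gives $|\partial_\xi^\alpha g(x,\xi)|\leq |\alpha|!\, r^{-|\alpha|}\sup_V |g|$, and expanding $\Delta_\xi^k=\sum_{|\beta|=k}\binom{k}{\beta}\partial^{2\beta}$ yields
\[
\bigl|\Delta_\xi^k g(x,\xi_c)\bigr| \;\leq\; n^k\,(2k)!\,r^{-2k}\sup_V|g|.
\]
Dividing by $k!$ and using $(2k)!\leq 4^k(k!)^2$ gives $\frac{1}{k!}|\Delta_\xi^k g(x,\xi_c)|\leq C^{k+1} k!$, which is precisely the growth bound defining a formal classical analytic symbol of order $-n/2$.

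The main obstacle I anticipate is the careful matching between the cut-off index $N$ governing the quality of the amplitude and the truncation index $k$ of the asymptotic series. Since $\tilde{\tilde{a_N}}$ is only approximately analytic, one must track that the choice $N\sim \lambda/Ce$ made after \eqref{eq:8} remains compatible with every derivative of order $\leq 2k$ appearing in the third step, so that \eqref{eq:estimate on cutoff} can be used as a genuine Cauchy bound throughout the relevant range $k\leq \lambda/C$. Once this bookkeeping is in place, the three steps above combine to yield the claim.
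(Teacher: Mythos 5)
Your proposal is correct and follows essentially the same route as the paper: both reduce the claim to the Sj\"ostrand growth bound $|a_k|\leq C^{k+1}k!$ (equivalently $C^{k+1}k^k$) for the coefficients, obtained by applying Cauchy-type estimates to the analytic product $\tilde{\tilde{a_N}}\,\tilde{b}^{i_1}\cdots\tilde{b}^{i_m}$ evaluated at the holomorphic critical point $\xi_c$. The paper is terser --- it cites the Cauchy integral formula from \cite[Section 2.4]{Sjostrand} directly and massages the bound via Stirling into the form $\tilde{C}^{k+n/2}(k+n/2)^{n/2+k}$ before invoking \cite[Exercise 1.1]{Sjostrand} --- while you make explicit the holomorphy of $\xi_c$ and the bookkeeping between the cut-off index $N\sim\lambda/Ce$ and the truncation $k\leq\lambda/C$, a point the paper leaves implicit; the substance is the same.
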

\begin{proof}
	Let, $$a_{k} = \frac{1}{k!}\frac{(\bigtriangleup^{k})}{2}\big(\tilde{\tilde{a_{N}}}(x,\xi_{c}){\tilde{b}}^{i_{1}}(x,\xi_{c})\cdots{\tilde{b}}^{i_{m}}(x,\xi_{c})\big)$$
	Then from Cauchy integral formula  \cite [Section 2.4]{Sjostrand},\\
	\begin{align*}
	|a_{k}|&\leq C_{n}(k+1)^{n/2}(k-1)! 2^{k}\sup_{B(\xi_c)}\big(\tilde{\tilde{a_{N}}}(x,\xi_{c}){\tilde{b}}^{i_{1}}(x,\xi_{c})...{\tilde{b}}^{i_{m}}(x,\xi_{c})\big)\\
	& \leq C1_{n}(k+1)^{n/2}(k-1)! 2^{k}\\
	& \leq  C2_{n}(k+1)^{n/2}e^{2-k}(k-1)^{k-1/2} 2^{k} \text{ (Using Stirling's approximation)}\\
	& \leq C2_{n}\bigg(\frac{2}{e}\bigg)^{k+1}(k+1)^{n/2+k}\\
	& \leq \tilde{C_{n}}^{k+n/2}(k+n/2)^{n/2+k}.
	\end{align*}
	Hence,  $$ \sum_{0\leq k \leq \lambda /C}C_{n}\frac{1}{k!}\lambda^{-n/2-k}\frac{(\bigtriangleup^{k})}{2}\big(\tilde{\tilde{a_{N}}}(x,\xi_{c}){\tilde{b}}^{i_{1}}(x,\xi_{c})...{\tilde{b}}^{i_{m}}(x,\xi_{c})\big)  =   \sum_{0\leq k \leq \lambda /C}\lambda^{-n/2-k}a_{k+n/2}$$ is a formal analytic symbol ${{B}}^{i_{1}...i_{m}}(x,y,\eta;\lambda)$ by \cite [Excercise 1.1]{Sjostrand}.
\end{proof}
Hence,
\begin{align*}
&\int_{|x-y|<\delta/C_{0}} (e^{i\lambda \Phi(y,x,\xi,\eta)})\tilde{\tilde{a_{N}}}(x,\xi)f_{i_{1}\dots i_{m}}(z){\tilde{b}}^{i_{1}}(x,\xi)\cdots {\tilde{b}}^{i_{m}}(x,\xi)dxd\xi\\ & \quad \quad= 	\int_{|x-y|<\delta/C_{0}} (e^{i\lambda \Phi(y,x,\xi_{c},\eta)})f_{i_{1}\dots i_{m}}(x){{B}}^{i_{1}\dots i_{m}}(x,y,\eta;\lambda)dx\\
&\quad \quad \quad +	\int_{|x-y|<\delta/C_{0}} (e^{i\lambda \Phi(y,x,\xi_{c},\eta)})f_{i_{1}\dots i_{m}}(x)R(x,y,\eta;\lambda)dxd\xi .
\end{align*}
But, $$ \left|\int_{|x-y|<\delta/C_{0}} (e^{i\lambda \Phi(y,x,\xi_{c},\eta)})f_{i_{1}\dots i_{m}}(x)R(x,y,\eta;\lambda)dx\right| =\mathcal{O}(e^{-\lambda/c}).$$ 
Since, 
$$|R(x,y,\eta;\lambda)|\leq \O/C e^{-\lambda/c}.$$(See 2.10, \cite{Sjostrand}). 
So, this along with \eqref{eq:7} and \eqref{eq:8}, gives us: \\
$$\left|\int_{|x-y|<\delta/C_{0}} (e^{i\lambda \Phi(y,x,\xi_{c},\eta)})f_{i_{1}\dots i_{m}}(x){{B}}^{i_{1}\dots i_{m}}(x,y,\eta;\lambda)dx\right| = \mathcal{O}(e^{-\lambda/c}).$$
\end{proof} 

\bibliographystyle{plain}
\bibliography{Refs}

\def\dbar{\leavevmode\hbox to 0pt{\hskip.2ex \accent"16\hss}d}
\begin{thebibliography}{10}

\bibitem{Boman1991}
Jan Boman.
\newblock Helgason's support theorem for {R}adon transforms---a new proof and a
  generalization.
\newblock In {\em Mathematical methods in tomography ({O}berwolfach, 1990)},
  volume 1497 of {\em Lecture Notes in Math.}, pages 1--5. Springer, Berlin,
  1991.

\bibitem{Boman1992}
Jan Boman.
\newblock Holmgren's uniqueness theorem and support theorems for real analytic
  {R}adon transforms.
\newblock In {\em Geometric analysis ({P}hiladelphia, {PA}, 1991)}, volume 140
  of {\em Contemp. Math.}, pages 23--30. Amer. Math. Soc., Providence, RI,
  1992.

\bibitem{Boman-Quinto-Duke}
Jan Boman and Eric~Todd Quinto.
\newblock Support theorems for real-analytic {R}adon transforms.
\newblock {\em Duke Math. J.}, 55(4):943--948, 1987.

\bibitem{Boman1993}
Jan Boman and Eric~Todd Quinto.
\newblock Support theorems for {R}adon transforms on real analytic line
  complexes in three-space.
\newblock {\em Trans. Amer. Math. Soc.}, 335(2):877--890, 1993.

\bibitem{Eskin1998}
Gregory Eskin.
\newblock Inverse scattering problem in anisotropic media.
\newblock {\em Comm. Math. Phys.}, 199(2):471--491, 1998.

\bibitem{Gonzalez1994}
Fulton Gonzalez and Eric~Todd Quinto.
\newblock Support theorems for {R}adon transforms on higher rank symmetric
  spaces.
\newblock {\em Proc. Amer. Math. Soc.}, 122(4):1045--1052, 1994.

\bibitem{Guillemin-Sternberg_Book}
Victor Guillemin and Shlomo Sternberg.
\newblock {\em Geometric asymptotics}.
\newblock American Mathematical Society, Providence, R.I., 1977.
\newblock Mathematical Surveys, No. 14.

\bibitem{H1}
Lars H{\"o}rmander.
\newblock {\em The analysis of linear partial differential operators. {I}}.
\newblock Classics in Mathematics. Springer-Verlag, Berlin, 2003.
\newblock Distribution theory and Fourier analysis, Reprint of the second
  (1990) edition [Springer, Berlin; MR1065993 (91m:35001a)].

\bibitem{K1}
Venkateswaran~P. Krishnan.
\newblock A support theorem for the geodesic ray transform on functions.
\newblock {\em J. Fourier Anal. Appl.}, 15(4):515--520, 2009.

\bibitem{Krishnan2016}
Venkateswaran~P. Krishnan and Rohit~Kumar Mishra.
\newblock Microlocal analysis for restricted ray transform of symmetric tensor
  field in n-dimension.
\newblock In Preparation.

\bibitem{KS}
Venkateswaran~P. Krishnan and Plamen Stefanov.
\newblock A support theorem for the geodesic ray transform of symmetric tensor
  fields.
\newblock {\em Inverse Problems and Imaging}, 3(3):453--464, 2009.

\bibitem{Morrey1957}
C.~B. Morrey, Jr. and L.~Nirenberg.
\newblock On the analyticity of the solutions of linear elliptic systems of
  partial differential equations.
\newblock {\em Comm. Pure Appl. Math.}, 10:271--290, 1957.

\bibitem{Q2006:supp}
Eric~Todd Quinto.
\newblock {Support Theorems for the Spherical Radon Transform on Manifolds}.
\newblock {\em International Mathematics Research Notices}, 2006:1--17, 2006.
\newblock Article ID = 67205.

\bibitem{Sato1973}
Mikio Sato, Takahiro Kawai, and Masaki Kashiwara.
\newblock Microfunctions and pseudo-differential equations.
\newblock In {\em Hyperfunctions and pseudo-differential equations ({P}roc.
  {C}onf., {K}atata, 1971; dedicated to the memory of {A}ndr\'e {M}artineau)},
  pages 265--529. Lecture Notes in Math., Vol. 287. Springer, Berlin, 1973.

\bibitem{Sharafutdinov}
V.~A. Sharafutdinov.
\newblock “ray transform on riemannian manifolds,” lecture notes.
\newblock UW–Seattle available at:
  http://www.ima.umn.edu/talks/workshops/7-16-27.2001/sharafutdinov/, 1999.

\bibitem{Sharafutdinov_Generalized_Tensor_Fields}
V.~A. Sharafutdinov.
\newblock A problem of integral geometry for generalized tensor fields on
  {${\bf R}^n$}.
\newblock {\em Dokl. Akad. Nauk SSSR}, 286(2):305--307, 1986.

\bibitem{Sharafutdinov_Book}
V.~A. Sharafutdinov.
\newblock {\em Integral geometry of tensor fields}.
\newblock Inverse and Ill-posed Problems Series. VSP, Utrecht, 1994.

\bibitem{Sjostrand}
Johannes Sj{\"o}strand.
\newblock Singularit\'es analytiques microlocales.
\newblock In {\em Ast\'erisque, 95}, volume~95 of {\em Ast\'erisque}, pages
  1--166. Soc. Math. France, Paris, 1982.

\bibitem{SU1}
Plamen Stefanov and Gunther Uhlmann.
\newblock Stability estimates for the {X}-ray transform of tensor fields and
  boundary rigidity.
\newblock {\em Duke Math. J.}, 123(3):445--467, 2004.

\bibitem{SU2}
Plamen Stefanov and Gunther Uhlmann.
\newblock Boundary rigidity and stability for generic simple metrics.
\newblock {\em J. Amer. Math. Soc.}, 18(4):975--1003 (electronic), 2005.

\bibitem{SU3}
Plamen Stefanov and Gunther Uhlmann.
\newblock Integral geometry on tensor fields on a class of non-simple
  {R}iemannian manifolds.
\newblock {\em Amer. J. Math.}, 130(1):239--268, 2008.

\bibitem{Uhlmann2016}
Gunther Uhlmann and Andr\'as Vasy.
\newblock The inverse problem for the local geodesic ray transform.
\newblock {\em Inventiones Mathematicae}, 205(1):83--120, 2016.

\bibitem{Zhou2000}
Yiying Zhou and Eric~Todd Quinto.
\newblock Two-radius support theorems for spherical {R}adon transforms on
  manifolds.
\newblock In {\em Analysis, geometry, number theory: the mathematics of {L}eon
  {E}hrenpreis ({P}hiladelphia, {PA}, 1998)}, volume 251 of {\em Contemp.
  Math.}, pages 501--508. Amer. Math. Soc., Providence, RI, 2000.

\end{thebibliography}

\end{document}